\title{The observable structure of persistence modules}
\author{Fr\'{e}d\'{e}ric Chazal}
\address[Fr\'{e}d\'{e}ric Chazal]{GEOMETRICA, INRIA Saclay --- \^{I}le-de-France, 2--4, rue Jacques Monod, Orsay, 91893 Cedex, France}
\email{frederic.chazal@inria.fr}
\author{William Crawley-Boevey}
\address[William Crawley-Boevey]{Department of Pure Mathematics, University of Leeds, Leeds, LS2 9JT, UK}
\email{w.crawley-boevey@leeds.ac.uk}
\author{Vin de Silva}
\address[Vin de Silva]{Department of Mathematics, Pomona College, 610 N College Ave, Claremont, California, USA}
\email{vin.desilva@pomona.edu}
\subjclass[2010]{55U99}
\keywords{Persistence module. Persistent homology}
\theoremstyle{plain}
\newtheorem{thm}{Theorem}[section]
\newtheorem{lem}[thm]{Lemma}
\newtheorem{pro}[thm]{Proposition}
\newtheorem{cor}[thm]{Corollary}
\theoremstyle{definition}
\newtheorem{dfn}[thm]{Definition}
\newtheorem{exa}[thm]{Example}
\newtheorem{rem}[thm]{Remark}
\newcommand{\R}{\mathbb{R}}
\newcommand{\Q}{\mathbb{Q}}
\newcommand{\Z}{\mathbb{Z}}
\newcommand{\Pers}{\mathbf{Pers}}
\newcommand{\Eph}{\mathbf{Eph}}
\newcommand{\Ob}{\mathbf{Ob}}
\DeclareMathOperator{\Ker}{Ker}
\DeclareMathOperator{\Coker}{Coker}
\DeclareMathOperator{\Ima}{Im}
\DeclareMathOperator{\rank}{rank}
\DeclareMathOperator{\rad}{rad}
\DeclareMathOperator{\Hgr}{H}
\newcommand{\kk}{\mathbf{k}}
\DeclareMathOperator{\rk}{rk}
\DeclareMathOperator{\mult}{m}
\newcommand{\colim}{\operatorname{colim}}
\newcommand{\wto}{\dashrightarrow}
\newcommand{\inter}{\mathrm{d_i}}
\newcommand{\bottle}{\mathrm{d_b}}
\newcommand{\ellinf}{\mathrm{d}^\infty}
\newcommand{\dgm}{\mathrm{dgm}}
\newenvironment{vlist}%
 { \begin{list}%
         {$\bullet$}%
         {\setlength{\labelwidth}{20pt}%
          \setlength{\leftmargin}{25pt}%
          \setlength{\topsep}{1ex}
          \setlength{\itemsep}{1.5ex}
          \setlength{\parsep}{0pt}}}%
 { \end{list} }
\begin{document}

\begin{abstract}
In persistent topology, q-tame modules appear as a natural and large class of persistence modules indexed over the real line for which a persistence diagram is definable.  However, unlike persistence modules indexed over a totally ordered finite set or the natural numbers, such diagrams do not provide a complete invariant of q-tame modules.  The purpose of this paper is to show that the category of persistence modules can be adjusted to overcome this issue.  We introduce the \emph{observable category} of persistence modules: a localization of the usual category, in which the classical properties of q-tame modules still hold but where the persistence diagram is a complete isomorphism invariant and all q-tame modules admit an interval decomposition.
\end{abstract}

\maketitle

\section{Introduction}
\label{sec:intro}

\subsection{Discrete persistence modules}
\label{subsec:intro}

Topological persistence~\cite{Edelsbrunner_L_Z_2002,Zomorodian_Carlsson_2005} may be introduced with the observation that a nested sequence of topological spaces
\[
X_0 \subseteq X_1 \subseteq \dots \subseteq X_n
\]
gives rise to a sequence of vector spaces and linear maps
\[
\Hgr(X_0) \to \Hgr(X_1) \to \dots \to \Hgr(X_n)
\]
upon computing homology with coefficients in a field~$\kk$. In general, a diagram of vector spaces and linear maps
\[
V_0 \to V_1 \to \dots \to V_n
\]
is called a \emph{persistence module} indexed by $\{0,1,\dots,n\}$.
Such a diagram may be expressed as a direct sum of certain indecomposable diagrams called \emph{interval modules}~\cite{Zomorodian_Carlsson_2005}, parametrized by intervals $[p,q] \subseteq \{0, 1, \dots, n\}$. The interval module $V = \kk_I$ associated to an interval~$I$ is defined by
\[
V_i =
\begin{cases}
\kk \quad
&\text{if $i \in I$}
\\
0
&\text{otherwise}
\end{cases}
\]
with the nonzero maps $\kk \to \kk$ set equal to~1.

The number of direct summands $\mult_{p,q}$ of each type $\kk_{[p,q]}$ is independent of the specific decomposition, by the Krull--Schmidt principle or more directly from the formula
\[
\mult_{p,q}
=
\rank(V_p \to V_q)
-\rank(V_p \to V_{q+1})
-\rank(V_{p-1} \to V_q)
+\rank(V_{p-1} \to V_{q+1})
.
\]
As a result, the collection of numbers $(\mult_{p,q} \mid 0 \leq p \leq q \leq n)$ is a complete invariant of the persistence module, and an invariant of the initial topological data. It is typically expressed as a \emph{barcode} or \emph{persistence diagram}~\cite{Edelsbrunner_L_Z_2002,Zomorodian_Carlsson_2005}.

\subsection{Persistence modules for the real line}
The purpose of this short paper is to address some issues that arise when attempting to follow the same thought process for persistence modules indexed by the real line. Here are the main points of divergence:
\begin{vlist}
\item
Not every persistence module is decomposable into interval modules.
\item
Nonetheless, there are classes of persistence module---q-tame~\cite{Chazal_dS_G_O_2012}, pointwise finite-dimensional~\cite{Lesnick_2011}---for which a persistence diagram is definable. Even so, not every q-tame persistence module is decomposable into interval modules.

\item
The persistence diagram is not a complete invariant. Two non-isomorphic q-tame persistence modules may have the same persistence diagram. This is true even if we use a more refined invariant, the decorated persistence diagram~\cite{Chazal_dS_G_O_2012}.
\end{vlist}
To be fair, there are effective ways of working around these problems~\cite{Chazal_dS_G_O_2012,Lesnick_2011}.
What we offer here is the suggestion that the awkwardness dissolves completely if we make a small adjustment to the category of persistence modules that we work in.

The adjustment is motivated by the following principle: whereas persistence modules carry information at many different scales simultaneously, what matters most is how the information persists across scales (through the structure maps). Features that exist over a short range are regarded as relatively unimportant. In topological data analysis, such short-term information arises from noisy sampling, for instance. In the extreme case are \emph{ephemeral} features: non-zero vectors that are seen at exactly one index value. It is conventional to regard such information as statistically meaningless.

Our proposal is to build this principle---that of ignoring ephemeral information---directly into the category that we work with. The mechanism for doing so is Serre localization. The resulting \emph{observable category} of persistence modules turns out to be beautifully behaved. Persistence modules $\kk_{[p,q]}$, $\kk_{[p,q)}$, $\kk_{(p,q]}$, $\kk_{(p,q)}$ associated to different intervals with the same endpoints become isomorphic. Every q-tame module has an interval decomposition. The persistence diagram is a complete isomorphism invariant for q-tame modules.
And there is a very clean description of the morphisms in this category (something that is not always available for such constructions).

\subsection{Basic definitions}
\label{subsec:poset}

Let $(R,\leq)$ be a partially ordered set. The category $\Pers$ of persistence modules over~$R$ (or `indexed by~$R$') is defined as follows:
\begin{vlist}
\item
A \emph{persistence module} $V$ is a functor from $R$, considered in the natural way as a category,
to the category of vector spaces. Thus it consists of vector spaces $V_t$ for $t\in R$ and
linear maps $\rho_{ts}:V_s\to V_t$ for $s\le t$ called \emph{structure maps}, which satisfy $\rho_{ts} = \rho_{tu} \rho_{us}$ for all $s \le u\le t$
and $\rho_{tt} = 1_{V_t}$ for all~$t$.

\item
A \emph{morphism} $\phi: V \to W$ is a natural transformation between functors.
Thus, it is a collection of linear maps $\phi_t: V_t \to W_t$ such that $\phi_t \rho_{ts} = \sigma_{ts} \phi_s$ for all $s \leq t$.
(The maps $\sigma_{ts}$ are the structure maps for~$W$.) 
\end{vlist}

\begin{rem}
\label{rem:morphism2}
Equivalently, a morphism $\phi: V \to W$ is a collection of linear maps $\phi_{ts}: V_s \to W_t$ defined for $s \leq t$, such that $\phi_{ts} = \sigma_{tv} \phi_{vu} \rho_{us}$ whenever $s \leq u \leq v \leq t$.
The translation between the two formulations is given by $\phi_{st} = \phi_t \rho_{ts} = \sigma_{ts} \phi_s$ in one direction, and $\phi_t = \phi_{tt}$ in the other.
In what follows we will favour this second formulation.
\end{rem}

\begin{rem}
The category of persistence modules is an abelian category, and even a \emph{Grothendieck category}
meaning that it has a generator
and satisfies Grothendieck's (AB5) condition (see for example
\cite[Theorem 14.2]{Faith_1973}).
%
%
The {zero module}~$0$ is defined pointwise: it is the zero vector space at all index values~$t$.
Images, kernels, cokernels, exact sequences and direct sums are likewise defined pointwise.
\end{rem}

Our results only apply when the indexing set $R$ is totally ordered, so we assume this henceforth.
By an \emph{interval} in $R$ we mean a non-empty subset $I$ of $R$ with the property that
$s \le u \le t$ with $s,t\in I$ implies $u\in I$. 
The corresponding \emph{interval module} $V=\kk_I$ is given by $V_t = \kk$ for $t\in I$, $V_t=0$ for $t\notin I$, and $\rho_{ts}=1$ for $s,t\in I$ with $s\le t$.


\begin{exa}
Let $p < q$. Then we have closed, half-open and open intervals
\begin{alignat*}{2}
[p,q] &= \{ t \in R \mid p \leq t \leq q\}
&\qquad
[p,q) &= \{ t \in R \mid p \leq t < q\}
\\
(p,q] &= \{ t \in R \mid p < t \leq q\}
&\qquad
(p,q) &= \{ t \in R \mid p < t < q\}
\end{alignat*}
with endpoints $p,q \in R$. Not all intervals need be of this type (for instance if $R = \mathbb{Q}$).
\end{exa}

\begin{lem}
\label{lem:int-indecomp}
Interval modules are indecomposable: they cannot be expressed as a nontrival direct sum of submodules.
\end{lem}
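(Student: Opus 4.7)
The plan is to compute the endomorphism ring $\End(\kk_I)$ and show that it is isomorphic to the field $\kk$. Indecomposability then follows from the standard observation that if the endomorphism ring of a module contains only the idempotents $0$ and $1$, then the module admits no nontrivial direct sum decomposition.

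First I would unpack what an endomorphism $\phi: \kk_I \to \kk_I$ looks like. Since $(\kk_I)_t = \kk$ for $t \in I$ and $0$ otherwise, the only potentially nonzero components of $\phi$ are the linear maps $\phi_t: \kk \to \kk$ for $t \in I$, which I identify with scalars $\phi_t \in \kk$. The naturality condition $\phi_t \rho_{ts} = \rho_{ts} \phi_s$, combined with the fact that $\rho_{ts} = 1$ whenever $s \le t$ both lie in $I$, collapses to $\phi_t = \phi_s$. Hence all $\phi_t$ coincide, and $\End(\kk_I) \cong \kk$ as rings.

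To conclude, suppose $\kk_I = A \oplus B$ is a direct sum of submodules. The projection onto $A$ is an idempotent endomorphism of $\kk_I$, hence corresponds under the isomorphism $\End(\kk_I) \cong \kk$ to an element of $\kk$ squaring to itself: necessarily $0$ or $1$. In the first case $A = 0$; in the second $B = 0$.

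The proof is essentially bookkeeping, and I do not anticipate any real obstacle. The only point that requires a moment's care is applying naturality to pairs $s \le t$ that both lie in $I$, so that the structure map is genuinely the identity rather than zero; this is exactly what the defining property of an interval (that it be convex in the total order) guarantees. A more hands-on alternative, should one prefer to avoid the endomorphism-ring formalism, is to argue pointwise: for any decomposition $\kk_I = A \oplus B$ and any $t \in I$, exactly one of $A_t, B_t$ equals $\kk$; comparing any two indices $s \le t$ in $I$ and noting that the identity structure map cannot factor through a direct summand that vanishes at one of the endpoints forces the choice to be uniform across $I$.
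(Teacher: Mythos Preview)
Your proof is correct and follows essentially the same approach as the paper: compute that $\End(\kk_I)\cong\kk$ by observing that naturality forces all the nonzero components of an endomorphism to be the same scalar, and then conclude indecomposability from the absence of nontrivial idempotents in~$\kk$. The only cosmetic difference is that the paper phrases the computation using the two-index morphism components $\phi_{ts}$ of Remark~\ref{rem:morphism2} rather than the one-index $\phi_t$, but the argument is the same.
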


\begin{proof}
The endomorphism ring of an interval module is isomorphic to~$\kk$. Indeed, for any endomorphism $\phi = (\phi_{ts})$ the non-trivial terms (those with $s,t \in I$) are scalars and, indeed, must be equal to the same scalar.
The projection maps in a direct-sum decomposition would be idempotent endomorphisms, but $\kk$ has no nontrivial idempotents.
\end{proof}

With this in mind, the natural question is whether every persistence module over a total order~$R$ decomposes as a direct sum of interval modules. The answer is yes when $R$ is finite or the natural numbers~\cite{Webb_1985}; and also yes in the special case of modules which are finite-dimensional at each index, assuming that $R$ is has a countable subset 
which is dense in the order topology on $R$~\cite{CrawleyBoevey_2012}. But in general 
there are persistence modules which do not decompose into intervals. 
Examples due to Webb, Lesnick and Crawley-Boevey can be found in~\cite{Chazal_dS_G_O_2012}.

\subsection{q-tame persistence modules}
Of particular importance in applications are the \emph{q-tame} persistence modules~\cite{Chazal_dS_G_O_2012}, which are defined by the condition that $\rank(\rho_{ts})$ be finite whenever $s < t$. Important examples indexed by the real line are:
\begin{vlist}
\item
Let $X$ be a locally compact polyhedron and let $f: X \to \R$ be a proper continuous map which is bounded below. Then $( \Hgr_*(f^{-1}(-\infty,t]))_{t \in \R}$ is q-tame. This includes the case where $f$ is the distance from a compact subset $A \subset \R^n$ in any norm.

\item
Let $X$ be a totally bounded metric space. Then the Vietoris--Rips and intrinsic \v{C}ech filtered complexes on~$X$ have q-tame persistent homology~\cite{Chazal_dS_O_2012}.
\end{vlist}
Many of these q-tame examples fail to be \emph{pointwise finite-dimensional}: there are index values where $\dim(V_t)$ is infinite.
For an extreme case, Droz~\cite{Droz_2012} has constructed a compact metric space whose Vietoris--Rips homology is uncountably infinite-dimensional at all values of~$t$ in an interval of positive length.

Our main results are summarized in the following theorem, which collates Corollaries~\ref{c:obisquo} and \ref{cor:main}, Theorem~\ref{thm:KSA}, Example~\ref{e:qtobs} and Propositions~\ref{pro:distinctness},
\ref{pro:inter-ob} and \ref{pro:dgm-ob}.
We state it here only for $R=\R$, but some parts hold more generally.


\begin{thm}
\label{thm:intro-main}
There is a quotient category $\Ob$ of the category of persistence modules over $\R$, with the following properties.
\begin{enumerate}
\item[(i)]
The property of a persistence module being q-tame, its undecorated diagram and the interleaving
distance between two persistence modules depend only on their images in $\Ob$.
\item[(ii)]
Any q-tame persistence module decomposes as a direct sum of interval modules in $\Ob$. The list of summands is essentially unique, and is determined by the persistence diagram.
\end{enumerate}
\end{thm}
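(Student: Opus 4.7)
The plan is to build $\Ob$ as the Serre localization of $\Pers$ at the subcategory $\Eph$ of \emph{ephemeral} modules, that is, those $V$ for which $\rho_{ts}=0$ whenever $s<t$. I would first check that $\Eph$ is closed under subobjects, quotients and extensions; since $\Pers$ is Grothendieck, this yields an abelian quotient $\Ob:=\Pers/\Eph$. The clean description of morphisms referred to in the introduction should arise naturally from Remark~\ref{rem:morphism2}: a morphism in $\Ob$ ought to be represented by the "strictly off-diagonal" data $(\phi_{ts})_{s<t}$ satisfying the compatibility $\phi_{ts}=\sigma_{tv}\phi_{vu}\rho_{us}$ for $s\le u<v\le t$, thereby identifying morphisms whose diagonal terms differ by an ephemeral obstruction.

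For part (i), I would verify that q-tameness, the undecorated persistence diagram, and the interleaving distance are all determined by the data $(\rho_{ts})_{s<t}$ alone. Since this data is untouched when an ephemeral kernel or cokernel is added, these invariants descend to $\Ob$. Concretely, if $\phi:V\to W$ has kernel and cokernel in $\Eph$, then an exact-sequence argument shows $\rank(\rho_{ts})=\rank(\sigma_{ts})$ for all $s<t$, so q-tameness and the rank function (hence the diagram) transfer. For interleaving, since $\epsilon$-interleaving morphisms only involve strict shifts $s<t$, the same principle shows the interleaving distance factors through $\Ob$.

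For part (ii), the crucial observation is that the four interval modules $\kk_{[p,q]}$, $\kk_{[p,q)}$, $\kk_{(p,q]}$, $\kk_{(p,q)}$ differ by ephemeral summands at the endpoints, hence become isomorphic in $\Ob$. To decompose an arbitrary q-tame module $V$, I would construct a pointwise finite-dimensional representative $V'$ of the class $[V]\in\Ob$, for instance by taking $V'_t$ to be a suitable limit/colimit of images $\Ima(\rho_{t_2,t_1})$ across $t_1<t<t_2$ approaching $t$; q-tameness guarantees each $V'_t$ is finite-dimensional, while the natural comparison map $V'\to V$ should have ephemeral kernel and cokernel. Then Crawley-Boevey's decomposition theorem (cited for pointwise finite-dimensional modules with a countable dense index set) applies to $V'$, yielding an interval decomposition whose multiplicities are exactly those read off from the persistence diagram. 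Essential uniqueness follows from Krull--Schmidt in $\Ob$, using that interval modules remain indecomposable because the endomorphism computation of Lemma~\ref{lem:int-indecomp} localizes to give $\End_{\Ob}(\kk_I)\cong\kk$.

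The main obstacle I expect is constructing the pointwise finite-dimensional representative $V'$ functorially enough to qualify as a persistence module and then identifying it with $V$ in $\Ob$. Naïve choices such as $\Ima(\rho_{t,t-\epsilon})$ depend on $\epsilon$ and must be patched by passing to limits over shrinking shifts; one must then verify both finite-dimensionality at every index and that the resulting comparison map has kernel and cokernel annihilated by every strict structure map. Making this technical step precise, and ensuring compatibility with the decomposition theorem's hypotheses, is the heart of the argument; everything else is formal manipulation of the Serre quotient.
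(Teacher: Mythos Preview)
Your construction of $\Ob$ and the description of its morphisms match the paper. The difficulties lie in the arguments for (i) and especially (ii).

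\textbf{Part (i).} Your claim that a weak isomorphism $\phi:V\to W$ forces $\rank(\rho_{ts})=\rank(\sigma_{ts})$ for all $s<t$ is false. Take $V=\kk_{[0,1]}$, $W=\kk_{(0,1]}$, and $\phi$ the obvious surjection: its kernel is $\kk_{\{0\}}$ (ephemeral) and its cokernel is zero, yet $\rank(\rho_{1,0})=1$ while $\rank(\sigma_{1,0})=0$. The paper explicitly notes that $\rk_{st}$ is \emph{not} an observable invariant. What \emph{is} observable is the limiting rank $\rk_{[st]}$, obtained by factoring through $\overline V$ and $\underline V$; q-tameness is then characterized by finiteness of $\rk_{[st]}$. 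For the undecorated diagram the paper argues directly with the persistence measure, sandwiching a $W$-rectangle between two $V$-rectangles via the ob-isomorphism. Your conclusions are correct, but the ``rank equality'' shortcut does not work.

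\textbf{Part (ii).} The reduction to a pointwise finite-dimensional representative is a genuine gap: such a representative need not exist. Consider the paper's own example $V=\prod_{n\ge 1}\kk_{[-1/n,\,1/n]}$, which is q-tame. Any module $W$ ob-isomorphic to $V$ decomposes (in $\Ob$) as $\bigoplus_n \kk_{I_n}$ with $I_n$ having interior $(-1/n,1/n)$; by Krull--Schmidt this forces any honest interval decomposition of a pointwise finite $W$ to use intervals containing $0$, whence $\dim W_0=\infty$. So your limit/colimit construction cannot produce finite-dimensional $V'_t$ at $t=0$, and the appeal to the pointwise finite decomposition theorem fails. The paper avoids this entirely: it proves a new structure theorem (Theorem~3.2) for modules satisfying the descending chain condition on images and on \emph{sufficient bounded kernels}---conditions strictly weaker than pointwise finiteness---and then observes that the radical $\rad V$ of a q-tame module satisfies them. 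Since $\rad V\hookrightarrow V$ has ephemeral cokernel, this gives the decomposition in $\Ob$. The ``main obstacle'' you identified is real, and it is not overcome by patching limits; a different decomposition theorem is needed.
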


The rest of the paper is organised as follows. In Section~\ref{sec:ob} we define and study the `observable' category $\Ob$. In Section~\ref{sec:decomposition} we study interval decompositions. In Section~\ref{sec:real} we apply our results to the motivating case of persistence modules over the real line.

\section{The Observable Category}
\label{sec:ob}

For this section we make the standing assumption that $(R,\leq)$ is a total order that is \emph{dense}: for every $s < t$ there exists an intermediate element $s < u < t$.

\subsection{Ephemeral modules}
\label{subsec:eph}

Following~\cite{Chazal_dS_G_O_2012}, we say that a persistence module is \emph{ephemeral} if $\rho_{ts} = 0$ whenever $s < t$. Let $\Eph$ denote the full subcategory of $\Pers$ whose objects are the ephemeral modules. The \emph{observable category} $\Ob$ will be equivalent to the quotient category of the category of persistence modules `modulo' ephemeral modules, following Serre's theory of localization~\cite{Serre_1953,Gabriel_Zisman_1967}. 

To this end, a morphism $\phi$ between persistence modules is called a \emph{weak isomorphism} if $\Ker\phi$ and $\Coker\phi$ are both ephemeral. Then the quotient category is obtained from $\Pers$ by inverting all the weak isomorphisms.

\begin{lem}
\label{lem:Serre}
The full subcategory of ephemeral modules satisfies the condition of Serre: given a short exact sequence of persistence modules
\[
\begin{diagram}
\node{0}
	\arrow{e}
\node{V'}
	\arrow{e,t}{\iota}
\node{V}
	\arrow{e,t}{\pi}
\node{V''}
	\arrow{e}
\node{0}
\end{diagram}
\]
either statement
\begin{enumerate}
\item
$V$ is ephemeral
\item
$V'$ and $V''$ are both ephemeral
\end{enumerate}
implies the other.
\end{lem}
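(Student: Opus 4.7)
The plan is to exploit that the short exact sequence is exact pointwise: at every $t \in R$ we have a short exact sequence of vector spaces
\[
0 \to V'_t \xrightarrow{\iota_t} V_t \xrightarrow{\pi_t} V''_t \to 0,
\]
and both $\iota$ and $\pi$ intertwine the structure maps, i.e.\ $\iota_t \rho'_{ts} = \rho_{ts}\iota_s$ and $\pi_t \rho_{ts} = \rho''_{ts} \pi_s$.

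For the direction (1) $\Rightarrow$ (2), I would argue pointwise for $s < t$. If $V$ is ephemeral then $\rho_{ts}=0$, so $\iota_t \rho'_{ts} = \rho_{ts}\iota_s = 0$; since $\iota_t$ is injective, $\rho'_{ts}=0$. Similarly $\rho''_{ts}\pi_s = \pi_t \rho_{ts} = 0$, and since $\pi_s$ is surjective, $\rho''_{ts}=0$. Hence both $V'$ and $V''$ are ephemeral.

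The less trivial direction is (2) $\Rightarrow$ (1), and this is where the standing density assumption on $R$ enters (without it the claim fails, since a module concentrated at a single point is ephemeral but extensions with gaps between endpoints are not). Given $s < t$, pick an intermediate element $s < u < t$ and factor $\rho_{ts} = \rho_{tu}\rho_{us}$. For any $v \in V_s$, compute $\pi_u \rho_{us}(v) = \rho''_{us} \pi_s(v) = 0$ because $V''$ is ephemeral; hence $\rho_{us}(v) \in \Ker \pi_u = \Ima \iota_u$, say $\rho_{us}(v) = \iota_u(w)$. Then
\[
\rho_{ts}(v) = \rho_{tu}\iota_u(w) = \iota_t \rho'_{tu}(w) = 0,
\]
since $\rho'_{tu}=0$ by the ephemerality of $V'$. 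Thus $\rho_{ts} = 0$, and $V$ is ephemeral.

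The main obstacle, if one can call it that, is remembering to invoke density at the right place: the whole point of inserting the intermediate element $u$ is to separately apply the vanishing of structure maps in $V''$ (over $[s,u]$) and in $V'$ (over $[u,t]$), which would not be simultaneously available at a single pair $s<t$ without the interpolation.
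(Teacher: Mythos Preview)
Your proof is correct and follows essentially the same route as the paper's: both directions are argued pointwise, and for (2)$\,\Rightarrow\,$(1) you insert an intermediate $u$ (using density), use exactness at $u$ to lift $\rho_{us}(v)$ through $\iota_u$, and then kill it with $\rho'_{tu}=0$. The paper phrases the lift as a factoring map $\alpha$ rather than elementwise, but the argument is identical.
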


The lemma ensures that the class of weak isomorphisms is closed under composition, thanks to the exact sequence
\[
\begin{diagram}
{0}
\longrightarrow
{\Ker \phi}
\longrightarrow
{\Ker \psi\phi}
\longrightarrow
{\Ker \psi}
\longrightarrow
{\Coker \phi}
\longrightarrow
{\Coker \psi\phi}
\longrightarrow
{\Coker \psi}
\longrightarrow
{0}
\end{diagram}
\]
for a pair of composable maps $V \stackrel{\phi}{\longrightarrow} V' \stackrel{\psi}{\longrightarrow} V''$.

\begin{proof}
%
If $V$ is ephemeral, then clearly so are $V'$ and $V''$. 
Conversely, suppose $V'$ and $V''$ are ephemeral and $s<t$. Choose $u$ with $s<u<t$,
and consider the diagram
\[
\begin{diagram}
\node[2]{V'_t}	
	\arrow{e,t}{\iota_t}
\node{V_t}	
\\
\node{0}
	\arrow{e,t}{}
\node{V'_u}	
	\arrow{e,t}{\iota_u}
	\arrow{n,l}{\rho'_{tu} = 0}
\node{V_u}	
	\arrow{e,t}{\pi_u}
	\arrow{n,r}{\rho_{tu}}
\node{V''_u}	
	\arrow{e,t}{}
\node{0}
\\
\node[3]{V_s}	
	\arrow{e,t}{\pi_s}
	\arrow{n,r}{\rho_{us}}
	\arrow{nw,b,..}{\alpha}
\node{V''_s}	
	\arrow{n,r}{\rho''_{us} = 0}
\end{diagram}
\]
Since $\pi_u \rho_{us} = \rho''_{us} \pi_s = 0$ and the middle row is exact, 
there is a map~$\alpha$ with $\rho_{us} = \iota_u \alpha$. 
Then $\rho_{ts} = \rho_{tu} \rho_{us} = \rho_{tu} \iota_u \alpha = \iota_t \rho'_{tu} \alpha = 0$.
Thus $V$ is ephemeral.
\end{proof}

We point out that if the total order is not dense then the ephemeral subcategory is not Serre. For $s < t$ with no intermediate element, the sets $\{s\}$, $\{t\}$ and $\{s,t\}$ are intervals. The short exact sequence of interval modules
\[
\begin{diagram}
\node{0}
	\arrow{e}
\node{\kk_{\{t\}}}
	\arrow{e}
\node{\kk_{\{s,t\}}}
	\arrow{e}
\node{\kk_{\{s\}}}
	\arrow{e}
\node{0}
\end{diagram}
\]
has ephemeral outer terms and a non-ephemeral middle term.

\subsection{Observable morphisms}
\label{subsec:ob-morphisms}

The quotient $\Pers \stackrel{\pi}{\to} \Pers/\Eph$ that we wish to construct
is characterized by the following universal property~\cite{Gabriel_Zisman_1967}: first, the functor $\pi$ carries ob-isomorphisms to isomorphisms; second, any other functor $\Pers \to \mathbf{C}$ that carries ob-isomorphisms to isomorphisms factorizes uniquely through~$\pi$.

Our plan is to define a category $\Ob$ and a functor $\Pers \stackrel{\pi}{\to} \Ob$ explicitly, and then verify the universal property. In this way, $\Ob = \Pers/\Eph$ (where `=' means `is a category equivalent to').

\begin{dfn}
\label{dfn:ob-morphism}
An \emph{observable morphism} (or \emph{ob-morphism}) of persistence modules $\phi^\circ: V \wto W$ is a collection of linear maps $\phi_{ts}: V_s \to W_t$ defined for $s < t$ (strictly less than), such that $\phi_{ts} = \sigma_{tv} \phi_{vu} \rho_{us}$ whenever $s \leq u < v  \leq t$.
Composition of ob-morphisms is defined as follows, 
using the fact that the index set $R$ is a dense order.
If $\phi^\circ:V\wto W$ and $\psi^\circ: W \wto X$ are observable morphisms, then we define $(\psi^\circ \phi^\circ)_{ts} =\psi_{tu}\phi_{us}$ for any~$u$ with $s<u<t$.
This is well-defined since if $s<u<v<t$ then
$\psi_{tv}\phi_{vs} = \psi_{tv} \sigma_{vu}\phi_{us} = \psi_{tu}\phi_{us}$.
Every persistence module~$V$ has an ob-identity $1^\circ_V = ( \rho_{ts} \mid s < t )$ extracted from its structure maps.
\end{dfn}

\begin{dfn}
The category of persistence modules and ob-morphisms is called the \emph{observable category} of persistence modules, $\Ob$.
It comes with a functor $\Pers \stackrel{\pi}{\to} \Ob$ which keeps the objects the same and maps each morphism $\phi = (\phi_{ts} \mid s \leq t)$ to an ob-morphism $\pi(\phi) = \phi^\circ = (\phi_{ts} \mid s < t)$ by forgetting the terms $\phi_{tt}$.
\end{dfn}

\begin{exa}
\label{ex:interval-maps}
Between every ordered pair of the four interval modules $\kk_{(p,q)}$, $\kk_{[p,q)}$, $\kk_{(p,q]}$ and $\kk_{[p,q]}$ there is a nonzero ob-morphism defined by setting $\phi_{ts} = 1$ wherever domain and range both equal~$\kk$. It follows that the four interval modules are isomorphic in~$\Ob$. This contrasts with the situation in~$\Pers$ where nonzero maps exist only between certain pairs. The situation is summarized as follows:
\[
\quad
{\xymatrix@=3pc{
\kk_{(p,q]}
	\ar@{<-->}[r]
	\ar@{<-->}[d]
	\ar@{<-->}[dr]
&
\kk_{(p,q)}
	\ar@{<-->}[d]
	\ar@{<-->}[dl]
\\
\kk_{[p,q]}
	\ar@{<-->}[r]
&
\kk_{[p,q)}
}}
\quad
\quad
\quad
{\xymatrix@=3pc{
\kk_{(p,q]}
	\ar[r]
	\ar[d]
	\ar[dr]
&
\kk_{(p,q)}
	\ar[d]
\\
\kk_{[p,q]}
	\ar[r]
&
\kk_{[p,q)}
}}
\]
In general there is a nonzero ob-morphism $\kk_{I} \wto \kk_{J}$ if and only if $\inf(J) \leq \inf(I) < \sup(J) \leq \sup(I)$ (these limits being interpreted in the completion of~$R$).
\end{exa}

\begin{exa}
\label{exa:int-ob-indecomp}
For a non-singleton interval~$I$, the ob-endomorphism ring of the interval module $\kk_I$ is isomorphic to~$\kk$.
(The proof of Lemma~\ref{lem:int-indecomp} applies verbatim. The `non-singleton' condition guarantees that there is at least one non-trivial  $\phi_{ts}$.)
\end{exa}

\begin{exa}
If $V$ is ephemeral then $1^\circ_V = 0$ and therefore every ob-morphism to or from~$V$ is zero. Thus $V$ is zero (that is, both initial and terminal) in $\Ob$.
\end{exa}

In the remainder of this subsection we show $\Ob$ is equivalent to the localized category $\Pers/\Eph$, by establishing that $\Pers \stackrel{\pi}{\to} \Ob$ satisfies the universal property described above. Here is the first part of the universal property:

\begin{thm}
\label{thm:univ1}
If $\phi: V \to W$ is a weak isomorphism then $\phi^\circ$ is invertible in $\Ob$.
\end{thm}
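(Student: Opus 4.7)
The plan is to construct an explicit ob-morphism $\psi^\circ: W \wto V$ whose components $\psi_{ts}: W_s \to V_t$ (for $s < t$) are built by lifting through $\phi$, using density of the order to introduce an intermediate index. Given $w\in W_s$ and $s<t$, pick any $u$ with $s<u<t$. I claim that $\sigma_{us}(w) \in W_u$ lies in the image of $\phi_u$: since $\Coker\phi$ is ephemeral, the induced structure map $(\Coker\phi)_s \to (\Coker\phi)_u$ is zero, so the class of $\sigma_{us}(w)$ modulo $\Ima(\phi_u)$ is zero. Choose any preimage $v\in V_u$ with $\phi_u(v)=\sigma_{us}(w)$ and set
\[
\psi_{ts}(w) \;=\; \rho_{tu}(v).
\]

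The first task is to check this is independent of the choice of lift $v$ and of the intermediate index $u$. If $v,v'$ are two lifts, their difference lies in $\Ker\phi_u$; but $\Ker\phi$ is ephemeral, so $\rho_{tu}$ restricted to $\Ker\phi_u$ is the zero structure map of the submodule $\Ker\phi$, hence $\rho_{tu}(v-v')=0$ inside $V_t$. For two intermediate indices $u<u'$ (both strictly between $s$ and $t$), naturality of $\phi$ shows that $\rho_{u'u}(v)$ is a lift of $\sigma_{u's}(w)$, so any lift $v'$ at level $u'$ differs from $\rho_{u'u}(v)$ by an element of $\Ker\phi_{u'}$, which again dies under $\rho_{tu'}$; hence $\rho_{tu'}(v')=\rho_{tu'}\rho_{u'u}(v)=\rho_{tu}(v)$.

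Next I would verify that the family $(\psi_{ts})_{s<t}$ satisfies the coherence condition $\psi_{ts} = \rho_{tv}\,\psi_{vu}\,\sigma_{us}$ for $s\le u<v\le t$ of Definition~\ref{dfn:ob-morphism}, so that $\psi^\circ$ is a genuine ob-morphism $W\wto V$. This is essentially immediate from the construction once one picks an intermediate index between $u$ and $v$ and uses independence of the choice. Finally, I would check the two-sided inverse identities $\psi^\circ\phi^\circ = 1^\circ_V$ and $\phi^\circ\psi^\circ = 1^\circ_W$ by unwinding the composition rule: for $x\in V_s$ and an intermediate $u$ with $s<u<t$, the element $\phi_{us}(x)=\sigma_{us}\phi_s(x)=\phi_u\rho_{us}(x)$ has $\rho_{us}(x)$ as a canonical lift, so $\psi_{tu}(\phi_{us}(x)) = \rho_{tu}\rho_{us}(x)=\rho_{ts}(x)$; the other composite is dual.

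The only real obstacle is well-definedness of $\psi_{ts}$, and this is precisely where both hypotheses on $\phi$ (ephemeral cokernel for existence of the lift, ephemeral kernel for independence of the lift) are used in tandem with the density of the order. Everything else is diagram-chasing in the abelian category $\Pers$ together with bookkeeping against the ob-morphism composition rule.
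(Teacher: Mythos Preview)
Your argument is correct and is essentially the same as the paper's proof: the paper phrases the construction of $\psi_{ts}$ via a factorization $\omega_{us}:W_s\to\Ima\phi_u$ (from ephemerality of $\Coker\phi$) followed by an induced map $\tau_{tu}:\Ima\phi_u\to V_t$ (from ephemerality of $\Ker\phi$), which is exactly your ``lift and push forward'' description written diagrammatically rather than elementwise. The only cosmetic slip is in your verification of $\psi^\circ\phi^\circ=1^\circ_V$: to evaluate $\psi_{tu}$ you should pick an intermediate $u'$ with $u<u'<t$ and note that $\rho_{u's}(x)$ (not $\rho_{us}(x)$) is the relevant lift of $\sigma_{u'u}\phi_{us}(x)=\phi_{u'}\rho_{u's}(x)$; the conclusion $\rho_{tu'}\rho_{u's}(x)=\rho_{ts}(x)$ is the same.
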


\begin{proof}
We construct an inverse $\psi^\circ = (\psi_{st} \mid s < t)$ as follows. Given $s < t$, select an intermediate index~$u$.

Since $\Coker \phi$ is ephemeral, the composition of $\sigma_{us}:W_s\to W_u$ with the 
natural map $W_u\to \Coker\phi_{uu}$ is zero. Thus $\sigma_{us}$ factors as a map
$\omega_{us}:W_s\to \Ima\phi_{uu}$ followed by the inclusion of $\Ima\phi_{uu}$ into $W_u$.

Dually, since $\Ker\phi$ is ephemeral, the composition of the inclusion $\Ker\phi_{uu}\to V_u$
and $\rho_{tu}:V_u \to V_t$ is zero. Thus there is an induced morphism 
$\tau_{tu}:\Ima\phi_{uu} \to V_t$ whose composition with the natural map 
$V_u \to \Ima\phi_{uu}$ is $\rho_{tu}$. 

We define $\psi_{ts} = \tau_{tu}\omega_{us}$.
%
%
%
%
%
%
It is straightforward to verify that this construction does not depend on the choice of intermediate element~$u$, and that it defines an ob-morphism $\psi^\circ: W \wto V$ that is inverse to $\phi^\circ: V \wto W$.
\end{proof}

\begin{dfn}
Let $V$ be a persistence module. Define a persistence module $\overline{V}$ by setting
\begin{alignat*}{2}
\overline{V}_t &=& \;\colim &( V_s \mid s < t )
\end{alignat*}
at each index~$t$. The structure maps $\bar\rho_{ts}$ are defined using the universal property of colimits.
The universal property also generates the following maps:

\begin{vlist}
\item
A morphism $n^V: \overline{V} \to V$, induced by the maps $(\rho_{ts} \mid s < t)$.

\item
A morphism $\bar\phi: \overline{V} \to \overline{W}$ for every ob-morphism $\phi^\circ: V \wto W$.
\end{vlist}
This last operation respects composition and identities, so `bar' is a functor $\Ob \to \Pers$.
One can show that this is a left adjoint for~$\pi$.
\end{dfn}

\begin{pro}
Each $n^V: \overline{V} \to V$ is a weak isomorphism.
\end{pro}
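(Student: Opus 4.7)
The plan is to unfold the definition of weak isomorphism and verify its two clauses independently. By definition, $n^V$ is a weak isomorphism exactly when both $\Ker(n^V)$ and $\Coker(n^V)$ are ephemeral, i.e., when their induced structure maps vanish for every $s < t$. Throughout, I will write $\iota_{r,u}\colon V_r\to\overline{V}_u$ for the cocone maps of the colimit defining $\overline{V}_u=\colim(V_r\mid r<u)$; by construction of $n^V$ these satisfy $n^V_u\circ\iota_{r,u}=\rho_{ur}$.

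For the cokernel the argument is essentially immediate. Given $s<t$, the index $s$ itself belongs to the diagram $\{r\mid r<t\}$, so the cocone $\iota_{s,t}\colon V_s\to\overline{V}_t$ exists and satisfies $n^V_t\circ\iota_{s,t}=\rho_{ts}$. Therefore $\Ima(\rho_{ts})\subseteq\Ima(n^V_t)$, and consequently the map $V_s/\Ima(n^V_s)\to V_t/\Ima(n^V_t)$ induced by $\rho_{ts}$ vanishes.

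For the kernel the argument is slightly more delicate, and this is where I expect the main work to lie. The decisive point is that $\overline{V}_s$ is a \emph{filtered} colimit, since the indexing set $\{r\mid r<s\}$ is totally ordered and hence directed. In such a colimit every $\xi\in\overline{V}_s$ may be written as $\iota_{r,s}(v)$ for some $r<s$ and $v\in V_r$. If $\xi\in\Ker(n^V_s)$ then $\rho_{sr}(v)=n^V_s(\xi)=0$. Applying the structure map $\bar\rho_{ts}\colon\overline{V}_s\to\overline{V}_t$ gives $\bar\rho_{ts}(\xi)=\iota_{r,t}(v)$ by the functoriality built into the colimit construction; but since $s<t$ also qualifies as an index of the diagram for $\overline{V}_t$, compatibility of the cocones lets us rewrite this as $\iota_{s,t}(\rho_{sr}(v))=\iota_{s,t}(0)=0$. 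Hence $\bar\rho_{ts}$ vanishes on $\Ker(n^V_s)$, so $\Ker(n^V)$ is ephemeral.

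Combining the two computations shows that $n^V$ is a weak isomorphism. The only mildly subtle ingredient is the use of filteredness to represent elements of the colimit by pointwise data $v\in V_r$; everything else is a direct unpacking of the definitions of $n^V$ and of the structure maps of $\overline{V}$.
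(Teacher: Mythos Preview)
Your proof is correct and follows essentially the same approach as the paper. The only difference is cosmetic: for the kernel, the paper observes directly from the commutative square (with the extra diagonal cocone map $V_s\to\overline{V}_t$) that $\bar\rho_{ts}=\iota_{s,t}\circ n^V_s$, which kills $\Ker(n^V_s)$ without needing the elementwise representation via filteredness.
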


\begin{proof}
For every $s < t$ we have a commutative diagram:
\[
\begin{diagram}
\node{\overline{V}_t}
	\arrow{e,t}{n^V_t}
\node{V_t}
\\
\node{\overline{V}_s}
	\arrow{e,b}{n^V_s}
	\arrow{n,l}{\bar\rho_{ts}}
\node{V_s}
	\arrow{n,r}{\rho_{ts}}
	\arrow{nw}
\end{diagram}
\]
From this we see that $\bar\rho_{ts}$ carries $\Ker (n^V_s)$ to zero, while $\rho_{ts}$ carries $V_s$ to $\Ima (n^V_t)$ and hence to zero in $\Coker(n^V_t)$. Thus $\Ker(n^V)$ and $\Coker(n^V)$ are ephemeral.
\end{proof}

\begin{rem}
Similarly, the functor~$\pi$ has a right adjoint defined on objects by $\underline{V}_t = \lim(V_u \mid u > t)$, and there is a weak isomorphism $u^V: V \to \underline{V}$.
\end{rem}

\begin{exa}
If $V = \kk_{(p,q)}$, $\kk_{[p,q)}$, $\kk_{(p,q]}$ or $\kk_{[p,q]}$ then $\overline{V} = \kk_{(p,q]}$ and $\underline{V} = \kk_{[p,q)}$. All five morphisms in Example~\ref{ex:interval-maps} are instances of $n^V$ or~$u^V$. They become invertible in~$\Ob$.
\end{exa}

Now we prove the second part of the universal property.

\begin{thm}
\label{thm:univ2}
If $F: \Pers \to \mathbf{C}$ is a functor that carries weak isomorphisms to isomorphisms, then there is a unique functor $G: \Ob \to \mathbf{C}$ such that $F = G \pi$.
\end{thm}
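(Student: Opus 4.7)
The plan is to construct $G$ using the weak isomorphisms $n^V: \overline{V} \to V$ supplied by the preceding proposition. Since $\pi$ is the identity on objects, $G(V) = F(V)$ is forced. For a morphism $\phi^\circ: V \wto W$ in $\Ob$, both $F(n^V)$ and $F(n^W)$ are isomorphisms in $\mathbf{C}$ by hypothesis on $F$, so I can define
\[
G(\phi^\circ) = F(n^W) \circ F(\bar\phi) \circ F(n^V)^{-1}.
\]
Functoriality of $G$ reduces to that of the bar functor $\overline{(-)}:\Ob\to\Pers$: from $\overline{1^\circ_V} = 1_{\overline V}$ one gets $G(1^\circ_V) = 1_{F(V)}$, and from $\overline{\psi^\circ \phi^\circ} = \bar\psi \circ \bar\phi$ the middle pair $F(n^W)^{-1} F(n^W)$ cancels in the composition check. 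For the equality $F = G\pi$, naturality of the counit $n$ applied to an honest morphism $\phi: V \to W$ in $\Pers$ gives $\phi \circ n^V = n^W \circ \bar\phi$; applying $F$ and rearranging yields $F(\phi) = G(\phi^\circ) = G(\pi\phi)$.

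For uniqueness, suppose $G'$ is another functor with $G'\pi = F$. On objects $G'(V) = F(V) = G(V)$ is immediate. On morphisms, the plan is to express $\phi^\circ$ in terms of images of $\pi$ composed with morphisms that are invertible in $\Ob$. The central identity I would establish is
\[
\phi^\circ \circ \pi(n^V) = \pi(n^W \circ \bar\phi) \quad \text{in } \Ob.
\]
Granted this, $\pi(n^V)$ is invertible in $\Ob$ by Theorem~\ref{thm:univ1}, so applying $G'$ to both sides and using $G'\pi = F$ forces $G'(\phi^\circ) = F(n^W) F(\bar\phi) F(n^V)^{-1} = G(\phi^\circ)$, as needed.

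The main obstacle is this displayed identity, which is a naturality statement for $n$ relative to ob-morphisms rather than merely $\Pers$-morphisms. I would verify it componentwise: for $s < u < t$ and a class $[v] \in \overline V_s$ with representative $v \in V_q$ (where $q < s$), the left-hand side evaluates to $\phi_{tu}(n^V_u(\bar\rho_{us}[v])) = \phi_{tu}(\rho_{uq}(v)) = \phi_{tq}(v)$ by the ob-morphism compatibility axiom, while the right-hand side evaluates via the colimit description of $\bar\phi_t$ to $n^W_t([\phi_{u'q}(v)]) = \phi_{tq}(v)$ for any intermediate $q < u' < t$. Equivalently, the identity says that $\pi(n^{(-)}): \pi\overline{(-)} \Rightarrow 1_\Ob$ is a natural transformation, which amounts to the triangular equation of the adjunction $\overline{(-)} \dashv \pi$ alluded to in the excerpt.
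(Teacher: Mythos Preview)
Your proof is correct and follows essentially the same approach as the paper: define $G(\phi^\circ) = F(n^W)\,F(\bar\phi)\,F(n^V)^{-1}$, use functoriality of the bar functor to check $G$ is a functor, use naturality of $n$ in $\Pers$ to get $F = G\pi$, and use the identity $\phi^\circ \circ \pi(n^V) = \pi(n^W \circ \bar\phi)$ in $\Ob$ for uniqueness. The paper simply asserts that last identity as commutativity of its diagram~(iii), whereas you spell out a componentwise verification; otherwise the arguments coincide.
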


\begin{proof}
Since $\Ob$ has the same objects as $\Pers$, it follows that $G$ is uniquely defined and satisfies $F = G\pi$ on objects. It remains to consider morphisms.

Let $\phi^\circ: V \wto W$ be an ob-morphism. We have a mixed-category diagram~(ii)
\[
\text{(i)}
{\xymatrix@=3pc{
\overline{V}
	\ar[r]^{\bar\phi}
	\ar[d]_{n^V}
& \overline{W}	
	\ar[d]^{n^W}
\\V
	\ar[r]_{\phi}
&W
}}
\quad
\quad
\quad
\text{(ii)}
{\xymatrix@=3pc{
\overline{V}
	\ar[r]^{\bar\phi}
	\ar[d]_{n^V}
& \overline{W}	
	\ar[d]^{n^W}
\\V
	\ar@{-->}[r]_{\phi^\circ}
&W
}}
\quad
\quad
\text{(iii)}
{\xymatrix@=3pc{
\overline{V}
	\ar@{-->}[r]^{\pi(\bar\phi)}
	\ar@{-->}[d]_{\pi(n^V)}
& \overline{W}	
	\ar@{-->}[d]^{\pi(n^W)}
\\V
	\ar@{-->}[r]_{\phi^\circ}
&W
}}
\]
which commutes after applying $\pi$ to the top three morphisms (iii).
By assumption, $F(n^V)$ is invertible and we are forced to define
\[
G(\phi^\circ) = F(n_W)  F(\bar{\phi})  F(n_V)^{-1}.
\]
Since $\overline{\psi \phi} = \bar\psi \bar\phi$ it follows that~$G$, defined in this way, is indeed a functor.

Now suppose $\phi^\circ = \pi(\phi)$ for some morphism $\phi: V \to W$. Then we have a commutative diagram (i) in $\Pers$ to which we apply $F$ to get $F(\phi) F(n^V) = F(n^W) F(\bar\phi)$. Since $F(n^V)$ is invertible we deduce
\[
F(\phi) = F(n_W)  F(\bar{\phi})  F(n_V)^{-1} = G(\phi^\circ).
\]
Hence $F = G \pi$ on morphisms.
\end{proof}

Theorems \ref{thm:univ1} and~\ref{thm:univ2} have the following consequence.

\begin{cor}
\label{c:obisquo}
$\Ob = \Pers/\Eph$.
\qed
\end{cor}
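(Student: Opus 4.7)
The plan is to invoke the universal property that characterizes the Serre localization $\Pers/\Eph$, which by definition is a category equipped with a functor from $\Pers$ that (a) inverts every weak isomorphism, and (b) is initial among such functors. Since such a universal object is unique up to canonical equivalence, it suffices to check that the explicit functor $\pi : \Pers \to \Ob$ built in Section~\ref{subsec:ob-morphisms} enjoys both properties (a) and (b). But these are exactly Theorems~\ref{thm:univ1} and~\ref{thm:univ2} respectively.

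More precisely, I would argue as follows. Let $Q : \Pers \to \Pers/\Eph$ denote the canonical quotient functor. By Theorem~\ref{thm:univ1}, the functor $\pi$ sends weak isomorphisms to isomorphisms; hence by the universal property of $Q$, there is a unique functor $\widetilde{\pi} : \Pers/\Eph \to \Ob$ with $\widetilde{\pi} Q = \pi$. Conversely, by the localization theory of~\cite{Gabriel_Zisman_1967,Serre_1953}, the quotient functor $Q$ sends weak isomorphisms to isomorphisms, so by Theorem~\ref{thm:univ2} applied with $F = Q$ and $\mathbf{C} = \Pers/\Eph$, there is a unique functor $\widetilde{Q} : \Ob \to \Pers/\Eph$ with $\widetilde{Q} \pi = Q$.

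Composing in either order produces endofunctors of $\Ob$ and of $\Pers/\Eph$ which, when precomposed with $\pi$ (respectively $Q$), agree with $\pi$ (respectively $Q$); the uniqueness clauses in the two universal properties then force $\widetilde{Q}\widetilde{\pi} = \mathrm{id}$ and $\widetilde{\pi}\widetilde{Q} = \mathrm{id}$ on morphisms, since both functors act as the identity on the common object class. This yields the asserted equivalence (indeed, isomorphism) of categories $\Ob \simeq \Pers/\Eph$.

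There is essentially no obstacle here, as all the work has been done in Theorems~\ref{thm:univ1} and~\ref{thm:univ2}; the only small subtlety to flag is the convention, already announced by the authors after Corollary~\ref{c:obisquo}'s ancestors, that the equality sign should be read as `canonically equivalent as categories', which is exactly what the two uniqueness arguments above produce.
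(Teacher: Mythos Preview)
Your argument is correct and matches the paper's approach exactly: the paper simply states that Theorems~\ref{thm:univ1} and~\ref{thm:univ2} together establish the universal property characterizing the Serre quotient, and leaves the standard `two universal objects are canonically equivalent' step implicit with a \qed. You have merely spelled out that step in detail.
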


\subsection{Observable invariants}
\label{subsec:ob-invariants}

Because there are more isomorphisms in the observable category, there are fewer isomorphism invariants. In this subsection we consider which quantities and constructions `make sense' in the observable category. A function on persistence modules is a \emph{strict invariant} if it is invariant under isomorphisms in~$\Pers$; it is an \emph{observable invariant} if it is is invariant under ob-isomorphisms.

\begin{exa}
Let $t \in R$. Then $\rk_t(V) = \dim(V_t)$ is a strict invariant but not an observable invariant of the persistence module~$V$.
\end{exa}

\begin{exa}
Let $s < t$. Then $\rk_{st}(V) = \rank(\rho_{ts}: V_s \to V_t)$ is a strict invariant but not an observable invariant of the persistence module~$V$.
\end{exa}

\begin{exa}
Let $s < t$. Then each of the four `limiting ranks'
\begin{alignat*}{2}
\rk_{[st]} (V) &= \rank(\overline{V}_s \to \underline{V}_t)
&\qquad
\rk_{[st)} (V) &= \rank(\overline{V}_s \to \overline{V}_t)
\\
\rk_{(st]} (V) &= \rank(\underline{V}_s \to \underline{V}_t)
&\qquad
\rk_{(st)} (V) &= \rank(\underline{V}_s \to \overline{V}_t)
\end{alignat*}
is an observable invariant. We have $\rk_{[st]} \leq \{ \rk_{st}, \rk_{[st)}, \rk_{(st]} \} \leq \rk_{(st)}$.
\end{exa}

\begin{proof}
The limiting ranks are observable because `bar' and `underbar' are functors $\Ob \to \Pers$. The factorization
\[
\begin{diagram}
\node{\overline{V}_s}
	\arrow{e}
\node{{V}_s}
	\arrow{e}
\node{\underline{V}_s}
	\arrow{e}
\node{\overline{V}_t}
	\arrow{e}
\node{{V}_t}
	\arrow{e}
\node{\underline{V}_t}
\end{diagram}
\qedhere
\]
implies the given inequalities.
\end{proof}

\begin{rem}
For a q-tame persistence module we have the following formul\ae:
\begin{alignat*}{2}
\rk_{[st]}(V)
&=
&\;\max &\left( \rk_{s^-t^+}(V) \mid s^- < s < t < t^+ \right)
\\
\rk_{(st)}(V)
&=
&\min &\left( \rk_{s^+ t^-}(V) \mid s < s^+ < t^- < t \right)
\end{alignat*}
\end{rem}

\begin{exa}
\label{e:qtobs}
The property of being q-tame is observable.
\end{exa}

\begin{proof}
Since $\rk_{s^-t^+} \leq \rk_{[st]} \leq \rk_{st}$ whenever $s^- < s < t < t^+$, it follows that $V$ is q-tame if and only if $\rk_{[st]}(V) < \infty$ whenever $s < t$. This criterion is observable.
\end{proof}

The \emph{order topology} on~$R$ has basis given by the \emph{basic open sets}
$(s,t) = \{x\in R : s<x<t \}$, $(s,\infty) = \{x\in R:s<x\}$, $(-\infty,t) = \{ x\in R : x<t\}$
and $(-\infty,\infty)=R$.
Note that any basic open set is an open interval, provided it is non-empty, but there may be others,
such as $\Q\cap (0,\sqrt{2})$ for $R=\Q$.
The \emph{interior} of any subset $X$ of $R$ is then the
union of all basic open sets contained in $X$.

The reader may easily verify the following lemma.

\begin{lem}
\label{lem:interiors}
In a dense total order, an interval has empty interior if and only if it is a singleton. If two intervals $I,J$ have the same non-empty interior, then that interior includes all basic open sets whose endpoints lie in $I \cup J$.
\qed
\end{lem}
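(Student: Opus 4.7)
The first assertion reduces to the observation that in a dense total order every non-empty basic open set has infinitely many elements (density iteratively produces more points between any two), so no non-empty basic open set fits inside a singleton. Conversely, if $I$ contains two distinct points $a < b$, the interval property combined with density gives $(a,b) \subseteq I$, and $(a,b)$ is a non-empty basic open subset of $I$, so the interior is non-empty.

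For the second assertion, my first step is to identify the interior of an arbitrary interval $I$ explicitly. Writing $p = \inf I$ and $q = \sup I$ in the Dedekind completion of $R$ (with $p = -\infty$ or $q = +\infty$ permitted if $I$ is unbounded), I claim that the interior of $I$ equals $\{x \in R : p < x < q\}$. For $\supseteq$, given such an $x$ one chooses $s,t \in I$ with $s < x < t$ directly from the definitions of infimum and supremum, and then $(s,t) \subseteq I$ by the interval property, so $x$ lies in a basic open subset of $I$. For the reverse inclusion, if $x \leq p$ and $(s,t)$ is any basic open set containing $x$, then $s < p$, and density produces an element of $R$ strictly between $s$ and $p$, necessarily outside $I$; the case $x \geq q$ is symmetric, and the unbounded basic open sets are handled identically.

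Given this identification, suppose $I$ and $J$ share the non-empty interior $U$. Since $U = \{x \in R : p_I < x < q_I\} = \{x \in R : p_J < x < q_J\}$ and $U$ is non-empty, a short argument using density forces $p_I = p_J$ and $q_I = q_J$; denote the common values by $p$ and $q$. For a basic open set $(s,t)$ whose endpoints lie in $I \cup J$, membership of $s$ and $t$ in either interval gives $p \leq s < t \leq q$, and hence $(s,t) \subseteq U$. The unbounded basic open sets are treated analogously: the presence of a finite endpoint in $I \cup J$ controls one side of the inclusion, and the absence of a finite endpoint forces the corresponding infimum or supremum of $I \cup J$ to be $\pm\infty$, giving the other side.

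There is no substantial obstacle here; the proof is a direct unpacking of the definitions. The only step requiring a little care is the claim that equality of the two interiors pins down the inf and sup of the original intervals in the completion, for which one uses density of $R$ to see that the interior accumulates at both endpoints of the interval whenever it is non-empty.
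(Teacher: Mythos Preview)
The paper leaves this lemma to the reader (it is stated with a \qed\ and prefaced by ``the reader may easily verify''), so there is no proof to compare against. Your argument is correct.

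Two minor remarks. First, the detour through the Dedekind completion, while valid, is heavier than necessary. A direct argument suffices for the second assertion: given $s,t \in I \cup J$ with $s < t$ and $x \in (s,t)$, pick any $u$ in the common non-empty interior $U$, so $u \in I \cap J$. If $x = u$ we are done. If $x < u$ then $s < u$; since $s$ lies in one of $I,J$ and $u$ lies in both, $(s,u)$ is a basic open subset of that interval and hence of $U$, and $x \in (s,u)$. The case $x > u$ is symmetric using~$t$. Second, your closing paragraph on unbounded basic open sets is not needed and is slightly muddled: the lemma speaks of basic open sets whose endpoints lie in $I \cup J \subseteq R$, so both endpoints are finite and only the form $(s,t)$ arises.
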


\begin{pro}
\label{pro:distinctness}
Interval modules $\kk_I, \kk_J$ are observably isomorphic if and only if the intervals $I, J$ have the same interior.
\end{pro}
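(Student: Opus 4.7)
The plan is to characterize ``same interior'' in a form that plugs directly into Example~\ref{ex:interval-maps} and Example~\ref{exa:int-ob-indecomp}. Since $R$ is a dense total order, a non-singleton interval $I$ with $p = \inf I$ and $q = \sup I$ (taken in the completion of $R$) has interior equal to $\{x \in R : p < x < q\}$, so two non-singleton intervals share an interior exactly when they have matching infima and suprema in the completion; singletons have empty interior by Lemma~\ref{lem:interiors}.

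For the sufficiency direction I would split into two cases. If both $I$ and $J$ are singletons, then $\kk_I$ and $\kk_J$ are ephemeral, so both become zero objects in $\Ob$ and are tautologically ob-isomorphic. If $I$ and $J$ share a non-empty interior, then they have matching endpoints in the completion, the condition in Example~\ref{ex:interval-maps} is satisfied in both directions, and one obtains nonzero ob-morphisms $\phi^\circ : \kk_I \wto \kk_J$ and $\psi^\circ : \kk_J \wto \kk_I$. To upgrade these to mutually inverse isomorphisms I would invoke Example~\ref{exa:int-ob-indecomp}: $\End_\Ob(\kk_I) \cong \kk$, and the scalar representing any ob-endomorphism is pinned down by any single non-trivial component. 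For $s < t$ in the common interior, density supplies an intermediate $u$ still in the interior, hence in $I \cap J$, giving $(\psi^\circ \phi^\circ)_{ts} = \psi_{tu}\phi_{us} = 1 = (1^\circ_{\kk_I})_{ts}$; consequently $\psi^\circ \phi^\circ = 1^\circ_{\kk_I}$, and the reverse composition is handled symmetrically.

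For necessity I would argue the contrapositive. If exactly one of $I, J$ is a singleton, then one of $\kk_I, \kk_J$ is ephemeral and hence zero in $\Ob$, while the other has $1^\circ \neq 0$ (any $s < t$ in the non-singleton interval provide a non-trivial component), so the two modules cannot be ob-isomorphic. If neither is a singleton but their interiors differ, then their endpoints in the completion differ in the infimum, the supremum, or both, and the biconditional in Example~\ref{ex:interval-maps} forces either $\Hom_\Ob(\kk_I, \kk_J) = 0$ or $\Hom_\Ob(\kk_J, \kk_I) = 0$, which rules out an isomorphism.

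The main obstacle I anticipate is the bookkeeping around the completion: verifying that the topological notion of ``same interior'' from Lemma~\ref{lem:interiors} really coincides with ``same infimum and supremum in the completion of $R$,'' particularly when an endpoint may or may not belong to $R$ itself. Once that translation is in hand, the rest of the argument is a direct assembly of the two earlier examples together with the ephemeral-equals-zero observation.
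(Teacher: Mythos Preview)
Your argument is correct and takes a somewhat different route from the paper. For sufficiency in the non-singleton case, both you and the paper build the same explicit ob-morphisms $\phi^\circ,\psi^\circ$ (this is Example~\ref{ex:interval-maps}), but the paper then checks the four families of equations defining an inverse pair directly, invoking Lemma~\ref{lem:interiors} to guarantee that whenever the left-hand side is~$1$ so is the right-hand side; you instead shortcut the verification via Example~\ref{exa:int-ob-indecomp}, observing that since $\End_{\Ob}(\kk_I)\cong\kk$ a single nontrivial component determines the whole endomorphism. For necessity, the paper produces an explicit observable invariant---the limiting rank $\rk_{(st)}$ for a basic open set $(s,t)$ contained in exactly one of the intervals---that separates $\kk_I$ from $\kk_J$; you argue instead that mismatched endpoints force one of the two Hom-spaces to vanish by the biconditional at the end of Example~\ref{ex:interval-maps}. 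Both approaches are valid. The trade-off is that your route leans on the unproved biconditional in Example~\ref{ex:interval-maps} and on the translation ``same interior $\Leftrightarrow$ same $\inf$ and $\sup$ in the completion,'' which is exactly the bookkeeping you flag; the paper's argument via $\rk_{(st)}$ and Lemma~\ref{lem:interiors} stays entirely inside~$R$ and avoids the completion, making it a bit more self-contained.
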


\begin{proof}
If the interiors of the intervals differ, then there is a basic open set $(s,t)$ contained in one of $I,J$ but not the other. Then $\rk_{(st)}(\kk_I) \ne \rk_{(st)}(\kk_J)$ so the interval modules are not ob-isomorphic.

Conversely, suppose $I,J$ have the same interior. If it is empty then $\kk_I, \kk_J$ are ephemeral and therefore ob-isomorphic.
Otherwise we define an ob-morphism $\phi: \kk_I \to \kk_J$ by setting $\phi_{ts} = 1$ whenever $s \in I$ and $t \in J$ (and 0 otherwise); we define the ob-inverse $\psi: \kk_J \to \kk_I$ symmetrically.
The equations
\begin{alignat*}{3}
&\phi_{ts} = \rho^J_{tv} \phi_{vu} \rho^I_{us},
\quad
&&\psi_{ts} = \rho^I_{tv} \psi_{vu} \rho^J_{us},
\qquad
&&\text{for $s \leq u < v \leq t$}
\\
&\rho^I_{ts} = \psi_{tu}\phi_{us},
\quad
&&\rho^J_{ts} = \phi_{tu}\psi_{us},
\qquad
&&\text{for $s < u< t$}
\end{alignat*}
follow from Lemma~\ref{lem:interiors}, which implies for each equation that if the left-hand side is equal to~$1$ then so is the right-hand side (the converse being obvious).
\end{proof}

\section{Interval Decomposition}
\label{sec:decomposition}

In this section $(R,\le)$ is a total order. Recall that $R$ is said to be a dense order
if for every $s < t$ there is an intermediate element $s < u < t$.
We say that an interval $I$ in $R$ is \emph{left separable} if it has a countable subset $S\subseteq I$
such that for all $t\in I$ there is $s\in S$ with $s\le t$. (It is equivalent that $I$ equipped
with the left order topology is a separable topological space.)
Clearly $\R$ is dense and any interval $I$ in $\R$ is left separable, so
all the results in this section apply for the real line.

\subsection{Decomposition of persistence modules with chain conditions}
\label{subsec:chain}

In this subsection we prove a mild generalization of the main result of \cite{CrawleyBoevey_2012}.
In the next subsection we apply it to \mbox{q-tame} persistence modules.

\begin{dfn}
Let $V$ be a persistence module over a total order~$R$.

(i) One says that $V$ has the \emph{descending chain condition on images} provided that for all $t,s_1,s_2,\dots\in R$
with $t \ge s_1 > s_2 > \dots$, the chain
\[
V_t \supseteq \Ima(\rho_{t s_1}) \supseteq \Ima(\rho_{t s_2}) \supseteq \dots
\]
stabilizes \cite{CrawleyBoevey_2012}.

(ii) Given $s,t\in R$ with $s\le t$, we say that $V_s$ has the
\emph{descending chain condition on $t$-bounded kernels} provided that
for all $r_1,r_2,\dots\in R$ with $t < \dots < r_2 < r_1$, the chain
\[
V_s \supseteq \Ker(\rho_{r_1 s}) \supseteq \Ker(\rho_{r_2 s}) \supseteq \dots
\]
stabilizes. Applying $\rho_{ts}$, it is equivalent that the chain
\[
\Ima(\rho_{ts}) \supseteq \Ima(\rho_{ts}) \cap \Ker(\rho_{r_1 s}) \supseteq \Ima(\rho_{ts}) \cap \Ker(\rho_{r_2 s}) \supseteq \dots
\]
stabilizes.

(iii) We say that $V$ has the \emph{descending chain condition on sufficient bounded kernels} provided that for all
$t\in R$ and $0\neq v\in V_t$, there exists $s\le t$
such that $v\in \Ima(\rho_{ts})$ and $V_s$ has the descending chain condition on $t$-bounded kernels.
\end{dfn}

Note that condition (iii) holds if $V$ has the descending chain condition on kernels, as considered in \cite{CrawleyBoevey_2012}, since
one can then take $s=t$. 
The following theorem thus generalizes \cite[Theorem 1.2]{CrawleyBoevey_2012}.

\begin{thm}
\label{t:main}
Suppose that $R$ is a total order with the property that any interval in $R$ is left separable.
Then any persistence module with the descending chain condition 
on images and on sufficient bounded kernels is a direct sum of interval modules.
\end{thm}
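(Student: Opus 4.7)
The plan is to mimic the proof of \cite[Theorem 1.2]{CrawleyBoevey_2012}, replacing the use of pointwise finite-dimensionality by the two DCC hypotheses and invoking left separability in place of the countable dense subset that Crawley-Boevey uses. The goal is to attach to every nonzero $v \in V_t$ a pair of one-sided ``birth'' and ``death'' cuts in $R$, producing a well-defined interval type $I(v)$, and then to decompose $V$ as a direct sum of interval modules indexed by these types.

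First, I would set up two one-parameter families of subspaces of each $V_t$. For the birth side, consider the images $\Ima(\rho_{ts})$ for $s \le t$; the DCC on images guarantees that on any descending sequence $s_1 > s_2 > \dots$ the chain stabilizes, and left separability of the interval $\{s \in R : s \le t\}$ then implies that for any Dedekind cut on the left the intersection $\bigcap \Ima(\rho_{ts})$ is realized along a countable cofinal decreasing sequence. For the death side, given $v \in V_t$, the DCC on sufficient bounded kernels provides $s \le t$ with $v \in \Ima(\rho_{ts})$ and such that $V_s$ has DCC on $t$-bounded kernels; this lets one analyse how a chosen preimage $w \in \rho_{ts}^{-1}(v)$ persists as $r$ increases past $t$ by examining the kernels $\Ker(\rho_{rs})$, again reducing to countable sequences via left separability of an appropriate right-hand interval.

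Using these two filtrations, one assigns to each $0 \ne v \in V_t$ an interval $I(v)$ with open or closed decoration at each endpoint according to whether the defining cut is achieved, groups elements by interval type, and decomposes $V$ functorially as $\bigoplus_I V^I$, exactly as in Crawley-Boevey's framework. Inside each $V^I$ one then selects, via Zorn's lemma, a maximal family of elements whose classes in the relevant associated graded space are linearly independent, lifts them coherently along the structure maps, and checks that each lift spans a copy of $\kk_I$; this yields $V^I \cong \kk_I^{(\kappa_I)}$ for some cardinal $\kappa_I$ and thus the desired decomposition of $V$.

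The main obstacle is this last splitting step. In the pointwise finite-dimensional case one may splice lifts together by elementary linear algebra, but here the pointwise spaces and the associated graded pieces may be infinite-dimensional. The DCC on images and the DCC on sufficient bounded kernels must be used together to rule out the infinite descent that could otherwise prevent a Zorn-chosen generator from splitting off as a direct summand; left separability is needed to reduce the transfinite compatibility checks between different indices to countable ones, at which point the DCC hypotheses take over and reproduce the splicing argument of \cite{CrawleyBoevey_2012} verbatim.
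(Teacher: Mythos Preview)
Your outline misidentifies where the argument breaks when one weakens pointwise finite-dimensionality to the two DCC hypotheses. In \cite{CrawleyBoevey_2012} the construction of the submodules $W_I$ (the ``splitting'' step you flag as the main obstacle) uses only the DCC on images: the paper here explicitly records that Lemmas~2.1(a) and~2.2, all of \S\S3--6, and Lemma~7.1(a) of \cite{CrawleyBoevey_2012} carry over unchanged, so the $W_I$ exist and their sum is automatically direct. No Zorn argument is needed; the sections $(F^-_{It},F^+_{It})$ remain disjoint, and $W_{It}$ is obtained as a complement of $F^-_{It}$ in $F^+_{It}$ exactly as before.

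What actually fails is Lemma~2.1(b) of \cite{CrawleyBoevey_2012}, and with it Lemma~7.1(b): the sections no longer \emph{cover} $V_t$. The real work is to show $\bigoplus_I W_{It} = V_t$, and this is where the DCC on sufficient bounded kernels enters. The paper isolates the needed replacement as a separate lemma: for $s\le t$ with $V_s$ satisfying the DCC on $t$-bounded kernels, and for any cut $c$ with $t\in c^-$ and $c^+\neq\emptyset$, one has $\Ima(\rho_{ts})\cap\Ker^+_{ct}=\Ima(\rho_{ts})\cap\Ker(\rho_{rt})$ for some $r\in c^+$. This is then fed into a direct contradiction argument: supposing some $v\in V_t$ lies outside $\bigoplus_I W_{It}$, one picks $s\le t$ witnessing the DCC on sufficient bounded kernels, constructs a birth cut $\ell$ via Lemma~7.1(a) and a death cut $u$ by hand, and shows that the resulting $W_{It}$ already sits inside the span one started with, forcing $Y=Y+\Ker^+_{ut}\cap\Ima^+_{\ell t}$ against the construction of $u$.

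Your sketch never identifies this covering problem, provides no analogue of the $\Ker^+_{ct}$ lemma, and instead proposes a Zorn-type lifting argument for a step that does not require it. The role you assign to left separability (reducing ``transfinite compatibility checks'' to countable ones in the splitting step) is also off: in \cite{CrawleyBoevey_2012} the separability hypothesis is used only in Lemma~3.2, which is part of the machinery that already survives here.
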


For the proof we freely use the notation and results of \cite{CrawleyBoevey_2012}.
The hypothesis in that paper that $R$ have a countable subset which is dense in the
order topology was only used in \cite[Lemma 3.2]{CrawleyBoevey_2012},
but it is stronger than is required (for example consider $\R^2$ with the
lexicographic ordering), so we have replaced it here with the left separability hypothesis on intervals.

Suppose that $V$ has the descending chain condition on images.
Of the results in \cite{CrawleyBoevey_2012}, Lemmas 2.1(a) and 2.2 hold, all results in sections 3--6 hold, and Lemma 7.1(a) holds.
What fails is Lemma 2.1(b). Then in Lemma 7.1(b) the set is disjoint, but needn't strongly cover $V_t$.
The following is a partial replacement for Lemma 2.1(b).

\begin{lem}
\label{l:bkercut}
Let $s\le t$ and suppose that $V_s$ has the descending chain condition on $t$-bounded kernels.
Suppose that $c$ is a cut with $t\in c^-$ and $c^+\neq\emptyset$.
Then $\Ima(\rho_{ts})\cap \Ker^+_{c t} = \Ima(\rho_{ts})\cap \Ker(\rho_{rt})$ for some $r\in c^+$.
\end{lem}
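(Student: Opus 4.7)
Following \cite{CrawleyBoevey_2012}, I shall treat $\Ker^+_{ct}$ as $\bigcap_{r\in c^+}\Ker(\rho_{rt})$, the descending limit of the family of kernels as $r$ decreases through $c^+$. The goal is then to find $r\in c^+$ with $\Ima(\rho_{ts})\cap\bigcap_{r'\in c^+}\Ker(\rho_{r't})=\Ima(\rho_{ts})\cap\Ker(\rho_{rt})$. The plan is to reduce this possibly uncountable intersection to one taken along a countable cofinal decreasing sequence in $c^+$, where the DCC hypothesis on $t$-bounded kernels can be applied.

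If $c^+$ has a minimum $r^*$, then $\Ker(\rho_{r^*t})\subseteq\Ker(\rho_{rt})$ for every $r\in c^+$, so the intersection equals $\Ker(\rho_{r^*t})$ and $r=r^*$ works. Otherwise $c^+$ has no minimum, and since $c^+$ is upward-closed and hence an interval in $R$, the standing left-separability hypothesis provides a countable $S\subseteq c^+$ with $\inf S=\inf c^+$. As $\inf c^+\notin c^+$, from $S$ I extract a strictly decreasing sequence $r_1>r_2>\dots$ in $c^+$ with $r_i\to\inf c^+$, which is cofinal in $c^+$ from above: for any $r'\in c^+$ one has $r_i<r'$ for all sufficiently large $i$.

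Applying the DCC on $t$-bounded kernels of $V_s$ to the decreasing sequence $t<\dots<r_2<r_1$ yields stabilization of $\Ker(\rho_{r_is})$, equivalently of $\Ima(\rho_{ts})\cap\Ker(\rho_{r_it})$, at some index $N$. Setting $r=r_N$, the inclusion $\Ima(\rho_{ts})\cap\Ker^+_{ct}\subseteq\Ima(\rho_{ts})\cap\Ker(\rho_{rt})$ is automatic from $r\in c^+$. For the reverse inclusion I fix $r'\in c^+$: if $r'\ge r$ then $\Ker(\rho_{rt})\subseteq\Ker(\rho_{r't})$ directly, while if $r'<r$ then cofinality supplies some $r_i<r'$ with $i\ge N$, and stabilization together with $\Ker(\rho_{r_it})\subseteq\Ker(\rho_{r't})$ gives $\Ima(\rho_{ts})\cap\Ker(\rho_{rt})=\Ima(\rho_{ts})\cap\Ker(\rho_{r_it})\subseteq\Ima(\rho_{ts})\cap\Ker(\rho_{r't})$.

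The main obstacle is the passage from the uncountable family $\{\Ker(\rho_{rt})\}_{r\in c^+}$ to a countable sequence on which the DCC actually has purchase; the left-separability hypothesis is precisely what makes this reduction work, generalizing the countable-dense-subset assumption of \cite{CrawleyBoevey_2012}.
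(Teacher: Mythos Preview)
Your argument works, but it takes a genuinely different route from the paper's. You first reduce to a countable situation---using left-separability of the interval $c^+$ to extract a strictly decreasing cofinal sequence $(r_i)$---and only then apply the DCC on $t$-bounded kernels to that fixed sequence. The paper instead argues by direct contradiction with no countability reduction: assuming $\Ima(\rho_{ts})\cap\Ker^+_{ct}\neq\Ima(\rho_{ts})\cap\Ker(\rho_{rt})$ for every $r\in c^+$, one picks any $r_1\in c^+$; since the full intersection is strictly smaller than $\Ima(\rho_{ts})\cap\Ker(\rho_{r_1 t})$, some $r_2\in c^+$ already witnesses a strict drop; iterating manufactures a strictly descending chain that violates the DCC. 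The failed conclusion itself supplies the next term at each step, so no cofinal sequence needs to be chosen in advance.

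Two points of comparison. First, left-separability is a hypothesis of Theorem~\ref{t:main}, not of Lemma~\ref{l:bkercut}; the paper's proof of the lemma is valid over an arbitrary total order, so your appeal to a ``standing'' hypothesis gives a weaker statement than the paper actually establishes. Second, your extraction of a strictly decreasing cofinal sequence from the countable left-cofinal set $S$ deserves one more line of justification (e.g.\ choose $r_n\in c^+$ strictly below $\min\{s_1,\dots,s_n,r_1,\dots,r_{n-1}\}$, which is possible since $c^+$ has no minimum), though this is routine. The paper's contradiction argument is shorter and sidesteps both issues; your approach has the virtue of making explicit where, in the proof of Theorem~\ref{t:main} as a whole, the left-separability hypothesis could in principle be absorbed.
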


\begin{proof}
Suppose that $\Ima(\rho_{ts})\cap \Ker^+_{c t} \neq \Ima(\rho_{ts})\cap \Ker(\rho_{rt})$ for all $r\in c^+$.
Since $c^+$ is non-empty, we can choose $r_1\in c^+$.
Since $\Ima(\rho_{ts})\cap \Ker^+_{c t} \neq \Ima(\rho_{ts})\cap \Ker(\rho_{r_1 t})$
there must be some $r_2$ with $\Ima(\rho_{ts})\cap \Ker(\rho_{r_2 t})$
strictly contained in $\Ima(\rho_{ts})\cap \Ker(\rho_{r_1 t})$.
Similarly,
since $\Ima(\rho_{ts})\cap \Ker^+_{c t} \neq \Ima(\rho_{ts})\cap \Ker(\rho_{r_2 t})$,
there must be some $r_3$ with $\Ima(\rho_{ts})\cap \Ker(\rho_{r_3 t})$
strictly contained in $\Ima(\rho_{ts})\cap \Ker(\rho_{r_2 t})$, and so on.
But then the chain
\[
\Ima(\rho_{ts})\cap \Ker(\rho_{r_1 t}) \supset \Ima(\rho_{ts})\cap \Ker(\rho_{r_2 t}) \supset \Ima(\rho_{ts})\cap \Ker(\rho_{r_3 t})
\supset \dots
\]
doesn't stabilize.
\end{proof}

\begin{proof}[Proof of Theorem~\ref{t:main}]
Suppose that $V$ has the descending chain condition on images and on sufficient bounded kernels.
As in \cite[\S 5]{CrawleyBoevey_2012}, one obtains submodules $W_I$ of $V$.

For $t\in R$, as in the proof of \cite[Theorem 1.2]{CrawleyBoevey_2012} there are 
sections $(F_{It}^-,F_{It}^+)$ for $I$ an interval which contains $t$, where
\[
F^\pm_{It} = \Ima^-_{\ell t} + \Ker^\pm_{ut} \cap \Ima^+_{\ell t},
\]
satisfying 
$F_{It}^+ = W_{It}\oplus F_{It}^-$. These sections no longer need to cover $V_t$,
but they are still disjoint, so by the argument in \cite[Lemma 6.1]{CrawleyBoevey_2012}
the sum of the $W_{It}$ is a direct sum.

Thus we obtain a submodule $\bigoplus_I W_I$ of $V$.
By \cite[Lemma 5.3]{CrawleyBoevey_2012} this submodule is a direct sum of interval modules.
We need to show it is equal to $V$.
Assume for a contradiction that there is $t\in R$ and an element $v\in V_t$ not in $\bigoplus_I W_{It}$.
By assumption there is $s\le t$
such that $v\in \Ima(\rho_{ts})$ and $V_s$ has the descending chain condition on $t$-bounded kernels.

Let $X = (\bigoplus_I W_{It}) \cap \Ima(\rho_{ts})$.
Since $v\in \Ima(\rho_{ts})$ but $v\notin X$,
we have $\Ima(\rho_{ts}) \not\subseteq X$.
Thus by \cite[Lemma 7.1(a)]{CrawleyBoevey_2012} there is a cut $\ell$ with $t\in \ell^+$ and
\[
X + \Ima^-_{\ell t} \cap \Ima(\rho_{ts}) \neq X + \Ima^+_{\ell t} \cap \Ima(\rho_{ts}).
\]
This inequality can only happen if $\Ima(\rho_{t s}) \not\subseteq \Ima_{\ell t}^-$,
so $s\notin \ell^-$, and hence $s\in\ell^+$.
Thus $\Ima^+_{\ell t} \subseteq \Ima(\rho_{ts})$.
Thus the inequality simplifies to
\[
X + \Ima^-_{\ell t} \neq X + \Ima^+_{\ell t}.
\]
Let $Y = X + \Ima^-_{\ell t}$. Clearly $\Ima^+_{\ell t} \not\subseteq Y$. Define
\begin{align*}
u^- &= \{ r \in R : \text{$r<t$ or $r\ge t$ and $\Ker(\rho_{rt})\cap \Ima^+_{\ell t} \subseteq Y$} \}\text{, and}
\\
u^+ &= \{ r \in R : \text{$r\ge t$ and $\Ker(\rho_{rt})\cap \Ima^+_{\ell t} \not\subseteq Y$} \}.
\end{align*}
Then $u$ is a cut and $t\in u^-$.

Now $\Ker^-_{u t} \cap \Ima^+_{\ell t} \subseteq Y$ since
\[
\Ker^-_{u t} \cap \Ima^+_{\ell t} = \bigcup_{\substack{r\in u^- \\ t\le r}} \Ker(\rho_{rt}) \cap \Ima^+_{\ell t}
\]
and by the definition of $u^-$, each term in the union is contained in $Y$.
We show that $\Ker^+_{u t} \cap \Ima^+_{\ell t} \not\subseteq Y$.
This is clear if $u^+$ is empty, for then $\Ker^+_{u t} = V_t$.
Thus suppose that $u^+$ is non-empty.
Since $\Ima^+_{\ell t} \subseteq \Ima(\rho_{ts})$ we have
\[
\Ker^+_{u t} \cap \Ima^+_{\ell t} = \Ker^+_{u t} \cap \Ima(\rho_{ts}) \cap \Ima^+_{\ell t}.
\]
Since $V_s$ has the descending chain condition on $t$-bounded kernels,
by Lemma~\ref{l:bkercut} there is some $r\in u^+$, such that this is equal to
\[
\Ker(\rho_{rt}) \cap \Ima(\rho_{ts}) \cap \Ima^+_{\ell t} = \Ker(\rho_{rt}) \cap \Ima^+_{\ell t},
\]
and by the definition of $u^+$ we have $\Ker(\rho_{rt}) \cap \Ima^+_{\ell t} \not\subseteq Y$.

Now since $t\in u^-$ and $t\in \ell^+$, the cuts $u$ and $\ell$ define an interval $I$ which contains $t$.
As already observed, we have
\[
W_{It} \oplus (\Ima^-_{\ell t} + \Ker^-_{u t} \cap \Ima^+_{\ell t}) = \Ima^-_{\ell t} + \Ker^+_{u t} \cap \Ima^+_{\ell t}.
\]
It follows that $W_{It} \subseteq \Ima^+_{\ell t} \subseteq \Ima(\rho_{ts})$, so $W_{It} \subseteq X$.
Then
\[
\begin{array}{ccl}
Y &=& Y + \Ker^-_{u t} \cap \Ima^+_{\ell t}
\\
&=& X + \Ima^-_{\ell t} + \Ker^-_{u t} \cap \Ima^+_{\ell t}
\\
&=& X + W_{It} + \Ima^-_{\ell t} + \Ker^-_{u t} \cap \Ima^+_{\ell t}
\\
&=& X + \Ima^-_{\ell t} + \Ker^+_{u t} \cap \Ima^+_{\ell t}
\\
&=& Y +  \Ker^+_{u t} \cap \Ima^+_{\ell t},
\end{array}
\]
a contradiction.
Thus $V = \bigoplus_I W_{I}$.
\end{proof}

\subsection{Decomposition of q-tame modules}
\label{subsec:q-tame}

In this section we prove an interval decomposition theorem for \mbox{q-tame} persistence modules in the observable category for a total order which is dense and has the property that all intervals are left separable.

\begin{dfn}
The \emph{radical} of a persistence module $V$ is the submodule $\rad V$ of~$V$ defined by
\[
(\rad V)_t = \sum_{s<t} \Ima(\rho_{ts}).
\]
By construction, it is the smallest submodule of $V$ such that $(V/\rad V)$ is ephemeral.
We say that $V$ is \emph{radical} if $V = \rad V$.
\end{dfn}

Observe that if $V$ is a q-tame persistence module, 
then $V$ has the descending chain condition on images and $V_s$ has the 
descending chain condition on $t$-bounded kernels for all $s<t$. 
If in addition $V$ is radical, it follows that $V$ has the 
descending chain condition on sufficient bounded kernels. 
Thus Theorem~\ref{t:main} gives:

\begin{cor}
\label{c:qtamerad}
If every interval in $R$ is left separable, 
then any radical q-tame persistence module is a direct sum of interval modules.
\end{cor}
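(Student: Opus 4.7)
The plan is simply to verify the three hypotheses of Theorem~\ref{t:main} for a radical q-tame persistence module~$V$, since the corollary is essentially packaged to follow from that theorem once the remark above it is justified.

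First I would check the descending chain condition on images. For a chain $t\ge s_1>s_2>\dots$ we have $s_i<t$ for $i\ge 2$, so q-tameness says each $\Ima(\rho_{ts_i})$ is a finite-dimensional subspace of $V_t$; from the second term on the chain lies in a finite-dimensional space and must stabilize. Second, for any $s<t$ and any strictly decreasing $r_1>r_2>\dots$ with $r_i>t$, I pass to the equivalent reformulation in the definition: the chain $\Ima(\rho_{ts})\supseteq \Ima(\rho_{ts})\cap \Ker(\rho_{r_1 s})\supseteq\dots$ lives inside $\Ima(\rho_{ts})$, which is finite-dimensional by q-tameness, so it stabilizes. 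Hence $V_s$ has the descending chain condition on $t$-bounded kernels.

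The only remaining condition is the descending chain condition on sufficient bounded kernels, and this is where the radical hypothesis enters. Given $t\in R$ and $0\ne v\in V_t$, the equality $V_t=(\rad V)_t=\sum_{u<t}\Ima(\rho_{tu})$ lets me write $v$ as a \emph{finite} sum $v=\sum_{i=1}^n \rho_{tu_i}(v_i)$ with each $u_i<t$. Setting $s=\max(u_1,\dots,u_n)$ we still have $s<t$, and because $\rho_{tu_i}=\rho_{ts}\rho_{su_i}$ we get $v\in \Ima(\rho_{ts})$. By the previous paragraph $V_s$ has the descending chain condition on $t$-bounded kernels, so the required condition holds at $v$.

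With all three hypotheses verified, Theorem~\ref{t:main} immediately yields that $V$ is a direct sum of interval modules. The only step that required any real thought is the last one, where the key observation is that the sum defining $(\rad V)_t$ is a sum of vectors, not of subspaces in some limit sense, so any individual $v$ lies in just finitely many of the summands and one can absorb them into a single $\Ima(\rho_{ts})$ by taking the maximum of finitely many indices. Everything else is just unwinding definitions against q-tameness.
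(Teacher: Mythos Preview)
Your proof is correct and follows exactly the approach sketched in the paragraph preceding the corollary in the paper: q-tameness gives the descending chain condition on images and on $t$-bounded kernels for $s<t$, and the radical hypothesis then supplies the ``sufficient'' part by expressing any nonzero $v\in V_t$ as a finite sum coming from some $\Ima(\rho_{ts})$ with $s<t$. You have simply written out in detail what the paper leaves as a one-sentence observation.
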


Now suppose that $R$ is a dense order. In this case
$\rad\rad V = \rad V$ for any $V$, so $\rad V$ is a radical persistence module. 
Clearly any submodule of a q-tame persistence module is again q-tame.
Thus we obtain:

\begin{cor}
\label{c:qtame}
Suppose $R$ is dense and every interval in $R$ is left separable.
If $V$ is a q-tame persistence module, then $\rad V$ is a direct sum of interval modules.
\end{cor}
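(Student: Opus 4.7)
The plan is to reduce Corollary~\ref{c:qtame} directly to Corollary~\ref{c:qtamerad} applied to the persistence module $\rad V$ in place of $V$. For this I need two things: that $\rad V$ is itself q-tame, and that $\rad V$ is radical in the sense that $\rad(\rad V) = \rad V$. Once both are in place, Corollary~\ref{c:qtamerad} delivers an interval decomposition of $\rad V$, which is exactly what the statement asks for.

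For the first point, $\rad V$ is by construction a submodule of $V$, so its structure maps are the restrictions of the $\rho_{ts}$. Hence the rank of the structure map of $\rad V$ from index $s$ to index $t$ is bounded by $\rank(\rho_{ts})$, which is finite for $s<t$ because $V$ is q-tame. Thus $\rad V$ inherits q-tameness, and in particular, by the observation recorded just before Corollary~\ref{c:qtamerad}, it satisfies the descending chain condition on images and $(\rad V)_s$ has the descending chain condition on $t$-bounded kernels for all $s<t$.

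For the second point, the inclusion $\rad(\rad V)\subseteq \rad V$ is automatic from the definition of $\rad$. The reverse inclusion is where the density of $R$ is used. Fix $t$ and $s<t$; by density, choose $u$ with $s<u<t$. Then $\rho_{ts} = \rho_{tu}\rho_{us}$, so
\[
\Ima(\rho_{ts}) = \rho_{tu}\bigl(\Ima(\rho_{us})\bigr) \subseteq \rho_{tu}\bigl((\rad V)_u\bigr) \subseteq (\rad\rad V)_t.
\]
Summing over $s<t$ gives $(\rad V)_t \subseteq (\rad\rad V)_t$. Thus $\rad V$ is radical, and together with q-tameness this entails, via the observation before Corollary~\ref{c:qtamerad}, the descending chain condition on sufficient bounded kernels needed for Theorem~\ref{t:main}.

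Applying Corollary~\ref{c:qtamerad} to $\rad V$ (which is legitimate because the hypothesis on left-separability of intervals of $R$ is kept) then yields the desired decomposition of $\rad V$ into interval modules. There is no real obstacle here: the only conceptual content is to observe that q-tameness passes to submodules, and that density of $R$ is precisely what makes the radical operator idempotent. If $R$ were not dense, the second step would fail (an ephemeral quotient $V/\rad V$ could remain non-zero after one further application of $\rad$), which explains why the density hypothesis is needed in the statement.
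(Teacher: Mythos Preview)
Your proof is correct and follows exactly the approach of the paper: observe that submodules of q-tame modules are q-tame, use density of $R$ to verify $\rad\rad V=\rad V$, and then invoke Corollary~\ref{c:qtamerad}. You have simply supplied the details the paper leaves implicit, in particular the intermediate-$u$ argument for idempotence of the radical.
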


\begin{exa}
If $R$ is the set of real numbers, the product of the interval modules associated to the intervals
$[-1/n,1/n]$ with $n\ge 1$ is q-tame, and its radical is the direct sum of the interval modules for the
intervals $(-1/n,1/n]$. Neither of these modules satisfies the descending chain condition on kernels, as in \cite{CrawleyBoevey_2012}.
\end{exa}

Suppose again that $R$ is a dense order.
Since the observable category $\Ob$ is identified with the quotient category $\Pers/\Eph$, and the functor $\pi : \Pers \to \Ob$ has a right adjoint, it follows that $\Eph$ is a localizing subcategory in the sense of \cite[p372]{Gabriel_1962}. Therefore $\Ob$ is a Grothendieck category by \cite[Proposition 9, p378]{Gabriel_1962} and $\pi$ commutes with direct sums.  Thus direct sums exist in $\Ob$, and are given in the same way as in $\Pers$: by taking the direct sum of the vector spaces for each point of~$R$.

For any persistence module $V$, the inclusion $\rad V \to V$ is a weak isomorphism.
(In fact, $\rad V$ is the image of the weak isomorphism $n^V: \overline{V} \to V$ from Section~\ref{subsec:ob-morphisms}). Thus we reach our main goal:

\begin{cor}
\label{cor:main}
Suppose $R$ is dense and every interval in $R$ is left separable.
If $V$ is a q-tame persistence module, 
then $V$ is isomorphic in~$\Ob$ to a direct sum of interval modules.
\end{cor}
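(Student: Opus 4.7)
The plan is to reduce the statement directly to Corollary~\ref{c:qtame}, which already handles the radical case. The paper has done all the hard work: Theorem~\ref{t:main} supplies the decomposition machinery, Theorem~\ref{thm:univ1} tells us that weak isomorphisms become genuine isomorphisms in $\Ob$, and the paragraph just above the corollary records that $\Ob$ is a Grothendieck category and that $\pi$ commutes with direct sums. What is left is essentially assembly.

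First I would confirm that the canonical inclusion $\iota : \rad V \hookrightarrow V$ is a weak isomorphism. Its kernel is zero, and hence ephemeral. Its cokernel is $V/\rad V$: for every $s<t$ and every $v \in V_s$ one has $\rho_{ts}(v) \in \Ima(\rho_{ts}) \subseteq (\rad V)_t$, so the induced structure map on $V/\rad V$ vanishes, making $V/\rad V$ ephemeral (this is the content of the statement ``$\rad V$ is the smallest submodule of $V$ such that $V/\rad V$ is ephemeral''). By Theorem~\ref{thm:univ1}, $\pi(\iota)$ is therefore an isomorphism in $\Ob$, so $V \cong \rad V$ in $\Ob$.

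Next, since $V$ is q-tame and $R$ satisfies the hypotheses on density and left separability, Corollary~\ref{c:qtame} applies to give a decomposition
\[
\rad V \;=\; \bigoplus_{\alpha} \kk_{I_\alpha}
\]
in $\Pers$. Applying $\pi$ and using that $\pi$ commutes with direct sums (because $\Eph$ is a localizing subcategory and $\pi$ admits a right adjoint, as noted above), the same formula holds in $\Ob$. Composing with the ob-isomorphism $V \cong \rad V$ from the previous step yields the desired interval decomposition of $V$ in~$\Ob$.

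I do not anticipate a serious obstacle here, since all the substantive ingredients are already proved. The one point I would take care with is the commutation of $\pi$ with (possibly uncountable) direct sums, so that the full decomposition of $\rad V$ genuinely survives the passage to $\Ob$; this is precisely the reason the Grothendieck/localizing-subcategory remark is placed immediately before the corollary, and it is what licenses the last step of the argument.
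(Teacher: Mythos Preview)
Your proposal is correct and follows essentially the same route as the paper: reduce to $\rad V$ via the weak isomorphism $\rad V \hookrightarrow V$, invoke Corollary~\ref{c:qtame} for the interval decomposition of $\rad V$ in $\Pers$, and then use that $\pi$ commutes with direct sums (the localizing-subcategory remark) to transport the decomposition to $\Ob$. The paper's justification is terser but the logical steps are identical.
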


This decomposition is in fact essentially unique.
There is a version of the Krull--Remak--Schmidt--Azumaya Theorem for Grothendieck categories, 
see \cite[\S 6.7]{Bucur_Deleanu_1968} or \cite[\S 4.8]{Pareigis_1970}. 
It says that if an object is written as a direct sum of objects in two different ways, 
and if each summand has local endomorphism ring, then the terms in the two sums 
can be paired off in such a way that corresponding summands are isomorphic.

In particular, since by Example~\ref{exa:int-ob-indecomp} interval modules (for non-singleton intervals) have ob-endomorphism ring equal to~$\kk$, which is a local ring, the Krull--Remak--Schmidt--Azumaya Theorem and Proposition~\ref{pro:distinctness} give the following result.

\begin{thm}
\label{thm:KSA}
Over a dense total order, if a persistence module is isomorphic in $\Ob$ to a direct sum of 
interval modules in two different ways, then the non-singleton intervals in each sum can 
be paired off in such a way that corresponding intervals have the same interior.
\qed
\end{thm}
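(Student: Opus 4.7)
The plan is to assemble the theorem directly from the three ingredients highlighted in the paragraph preceding the statement, so almost no new work is required; the task is mainly one of bookkeeping and of explaining why the singleton intervals may be set aside.

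First I would split each of the two given decompositions $V \cong \bigoplus_{I\in\mathcal{I}} \kk_I$ and $V \cong \bigoplus_{J\in\mathcal{J}} \kk_J$ according to whether the indexing intervals are singletons. A singleton interval $\{t\}$ in a dense total order has empty interior, and the corresponding interval module $\kk_{\{t\}}$ is ephemeral, hence zero in $\Ob$; so the singleton summands contribute nothing to the isomorphism class of $V$ in $\Ob$ and may be discarded. This reduces the theorem to the case where all summands in both decompositions correspond to non-singleton intervals.

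Next I would invoke the Krull--Remak--Schmidt--Azumaya theorem for Grothendieck categories (\cite[\S 6.7]{Bucur_Deleanu_1968} or \cite[\S 4.8]{Pareigis_1970}), which was quoted just above the statement. Its hypotheses are verified because: (a) $\Ob$ is a Grothendieck category, as established in Section~\ref{subsec:q-tame}, so the theorem applies to objects of $\Ob$; and (b) by Example~\ref{exa:int-ob-indecomp}, for each non-singleton interval $I$ the ob-endomorphism ring $\End_{\Ob}(\kk_I)\cong\kk$ is local. The theorem then produces a bijection between the non-singleton summands of the two decompositions such that paired summands are isomorphic in $\Ob$.

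Finally I would translate this ob-isomorphism of interval modules back to a statement about intervals, using Proposition~\ref{pro:distinctness}: $\kk_I$ and $\kk_J$ are ob-isomorphic precisely when $I$ and $J$ have the same interior. Applying this pointwise to the pairing gives the conclusion. There is no real obstacle; the only point requiring care is the treatment of singleton intervals, which the statement deliberately excludes precisely because they cannot be distinguished (nor counted) in $\Ob$.
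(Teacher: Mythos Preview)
Your proposal is correct and follows essentially the same approach as the paper: discard the singleton summands (which are zero in $\Ob$), apply the Krull--Remak--Schmidt--Azumaya theorem for Grothendieck categories using the fact that non-singleton interval modules have ob-endomorphism ring~$\kk$, and then translate the resulting pairing via Proposition~\ref{pro:distinctness}. Your explicit treatment of the singleton intervals is a useful clarification that the paper leaves implicit.
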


\section{Real-Parameter Persistence Modules}
\label{sec:real}

We return to the motivating case of persistence modules indexed by~$\R$.

\subsection{Interleavings and diagrams}
\label{subsec:int-diag}

Persistence modules over the real line are codified and studied using their persistence diagrams. The principal results are the stability theorem~\cite{CohenSteiner_E_H_2007,Chazal_dS_G_O_2012} and Lesnick's isometry theorem~\cite{Lesnick_2011,Chazal_dS_G_O_2012}. We review these results now.

\medskip
Two persistence modules $V,W$ are compared by finding \emph{interleavings} between them. An \emph{$\epsilon$-interleaving} is specified by collections of maps $\phi_{ts}: V_s \to W_t$ and $\psi_{ts}: W_s \to V_t$, defined for $t \geq s + \epsilon$, such that the equations
\begin{alignat*}{3}
&\phi_{ts} = \sigma_{tv} \phi_{vu} \rho_{us},
\qquad
&&\psi_{ts} = \rho_{tv} \psi_{vu} \sigma_{us},
\\
&\rho_{ts} = \psi_{tu}\phi_{us},
\qquad
&&\sigma_{ts} = \phi_{tu}\psi_{us},
\end{alignat*}
are satisfied whenever they are defined. It is immediate that
\begin{vlist}
\item
an isomorphism is the same thing as a $0$-interleaving;

\item
an ob-isomorphism restricts to $\epsilon$-interleavings for all $\epsilon > 0$.
\end{vlist}
The interleaving distance between two persistence modules is defined thus:
\[
\inter(V,W)
=
\inf \left( \epsilon
\mid
\text{there exists an $\epsilon$-interleaving between $V,W$}
\right)
\]
It is an extended pseudometric, taking values in $[0,\infty]$. The triangle inequality results from the fact that interleavings can be composed (adding the respective $\epsilon$-values). If $V,W$ are ob-isomorphic then $\inter(V,W) = 0$, so the `pseudo' is necessary.

\medskip
Associated to a persistence module~$V$ is its \emph{persistence measure}~\cite{Chazal_dS_G_O_2012}. This is a function defined on rectangles $[a,b] \times [c,d]$ by the formula
\begin{align*}
&\mu_V([a,b] \times [c,d])
\\
&\qquad =
\text{multiplicity of}\;\;
\big(
0 \longrightarrow \kk \longrightarrow \kk \longrightarrow 0
\big)
\;\;\text{in}\;\;
\big(
V_a \longrightarrow V_b \longrightarrow V_c \longrightarrow V_d
\big).
\end{align*}
The rectangle must lie in the extended half-plane
\[
\mathbb{H} =
\left\{
(p,q) \mid -\infty \leq p < q \leq +\infty
\right\},
\]
so $-\infty \leq a < b < c < d \leq +\infty$.
We set $V_{-\infty} = V_{+\infty} = 0$ to interpret the extreme cases.
The measure is additive with respect to splitting a rectangle into smaller rectangles, and therefore (being nonnegative) it is monotone with respect to inclusions of rectangles.

\medskip
The \emph{undecorated diagram} $\dgm(V)$ of a persistence module~$V$ is a multiset in~$\mathbb{H}$ defined, following~\cite{Chazal_dS_G_O_2012}, by the multiplicity function
\[
\mult_V(p,q)
=
\min
\left(
\mu_V([a,b] \times [c,d])
\mid
a < p < b < c < q < d
\right).
\]
We allow $-\infty < -\infty$ and $+\infty < +\infty$ when selecting $a$ and~$d$. Because of monotonicity, the minimum can be interpreted as a limit over a decreasing sequence of rectangles which contain $(p,q)$ in their interior.
The set of values $(p,q)$ of finite multiplicity is an open subset of $\mathbb{F}_V \subseteq \mathbb{H}$, the \emph{finite support} of~$V$. Within the finite support, the undecorated diagram is locally finite.
It is known that:
\begin{vlist}
\item
If $V$ is q-tame then $\mathbb{F}_V = \mathbb{H}$.
\item
If $V$ is q-tame and decomposable into intervals, then the undecorated diagram records exactly the endpoints of the intervals in the decomposition.
\end{vlist}
There is also a decorated diagram which is capable of distinguishing open, closed and half-open intervals.

\medskip
Two diagrams $\dgm(V)$, $\dgm(W)$ may be compared using the \emph{bottleneck distance}. Let ${\sim}$ denote a partial matching between the points of $\dgm(V)$ and $\dgm(W)$ in the respective finite supports. The cost of the partial matching is
\[
\mathrm{cost}(\sim)
=
\sup
\begin{cases}
\ellinf(v,w)
\quad &
\text{matched pairs $v \sim w$}
\\
\ellinf(v, \mathbb{H} - \mathbb{F}_W)
\quad &
\text{unmatched $v$}
\\
\ellinf(w, \mathbb{H} - \mathbb{F}_V)
\quad &
\text{unmatched $w$}
\end{cases}
\]
where we use the metric $\ellinf((p_1,q_1),(p_2,q_2)) = \max(|p_1-p_2|,|q_1-q_2|)$.
The bottleneck distance between diagrams is defined as
\[
\bottle(\dgm(V),\dgm(W))
=
\inf( \mathrm{cost}(\sim) \mid \text{$\sim$ is a partial matching}).
\]
One can show, using a compactness argument, that the infimum is attained.

\begin{thm}
[stability and isometry~\cite{CohenSteiner_E_H_2007,Lesnick_2011,Chazal_dS_G_O_2012}]
For arbitrary persistence modules $V,W$ over the real line, we have
\[
\bottle(\dgm(V), \dgm(W)) \leq \inter(V,W).
\]
If $V,W$ are q-tame then equality holds.
\qed
\end{thm}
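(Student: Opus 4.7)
The plan is to treat the two claims separately, exploiting that both quantities depend only on the ob-isomorphism classes of $V$ and $W$: the interleaving distance by Theorem~\ref{thm:intro-main}(i), and the undecorated diagram because the persistence measure $\mu_V([a,b]\times[c,d])$ can be read off the limiting ranks $\rk_{(bc)}$ (and analogous quantities on slightly smaller sub-rectangles), which are observable invariants by Section~\ref{subsec:ob-invariants}. Consequently, for the q-tame part, Corollary~\ref{cor:main} lets us model each module as a direct sum of interval modules inside $\Ob$.

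For the inequality $\bottle(\dgm(V),\dgm(W)) \leq \inter(V,W)$, which must hold for arbitrary modules, the approach is the standard ``box lemma'' argument of~\cite{CohenSteiner_E_H_2007,Chazal_dS_G_O_2012}. Given an $\epsilon$-interleaving $(\phi,\psi)$ and a rectangle $R=[a,b]\times[c,d]\subset\mathbb{H}$ with $c-b>2\epsilon$, a direct diagram-chase through the defining equations of the interleaving establishes $\mu_V(R)\leq\mu_W(R^\epsilon)$, where $R^\epsilon$ is the $\epsilon$-thickening of $R$ in $\ellinf$, and symmetrically. This measure-comparison is then converted into a partial matching of cost at most $\epsilon$ between the finitely-supported parts of the diagrams by a Hall-type marriage argument: one covers $\mathbb{H}$ by a grid at scale slightly larger than $\epsilon$ and matches multiplicities cell-by-cell against neighbouring cells, iterating toward the diagonal.

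For the equality in the q-tame case one needs $\inter(V,W)\leq \bottle(\dgm(V),\dgm(W))$. By Corollary~\ref{cor:main} and Theorem~\ref{thm:KSA}, write $V\cong\bigoplus_\alpha \kk_{I_\alpha}$ and $W\cong\bigoplus_\beta \kk_{J_\beta}$ in $\Ob$, so the endpoints of the $I_\alpha$ (resp.\ $J_\beta$) reproduce $\dgm(V)$ (resp.\ $\dgm(W)$). Given a partial matching of cost $\delta$ attaining the bottleneck distance, one assembles a $\delta$-interleaving summand-by-summand: each matched pair $I_\alpha\sim J_\beta$ has endpoints within $\ellinf$-distance $\delta$ and so admits an explicit $\delta$-interleaving between the corresponding interval modules; each unmatched $I_\alpha$ has length at most $2\delta$ (its endpoints lie within $\delta$ of the diagonal) and so $\kk_{I_\alpha}$ is $\delta$-interleaved with the zero module, and likewise for unmatched $J_\beta$. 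Because $\pi$ commutes with direct sums, summing these component interleavings produces a $\delta$-interleaving of $V$ and $W$ in $\Ob$.

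The main obstacle is the lift from ob-interleavings back to honest module-level $\epsilon$-interleavings as required by the definition of $\inter$. The precise open/closed character of each endpoint is washed out in $\Ob$ (Proposition~\ref{pro:distinctness}), so a chosen decomposition lifts only up to ephemeral discrepancies, which perturbs the interleaving equations by an arbitrarily small slack $\eta>0$. The cleanest resolution is to produce a $(\delta+\eta)$-interleaving for every $\eta>0$ and let $\eta\to 0$, using the compactness fact that the bottleneck infimum is attained to conclude $\inter(V,W)\leq \delta$; combined with the stability inequality this gives the claimed equality.
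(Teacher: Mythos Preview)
The paper does not prove this theorem at all: it is stated with a citation to \cite{CohenSteiner_E_H_2007,Lesnick_2011,Chazal_dS_G_O_2012} and closed immediately with \qed. It is imported as a known result from the literature and then \emph{used} in Section~\ref{sec:real} (for instance in the proof of Theorem~\ref{thm:real-main}). So there is no paper-proof to compare against; your proposal is an independent sketch.

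That said, your sketch is worth commenting on. For the stability inequality you outline the standard box-lemma-plus-matching argument from the cited references, which is correct in spirit but left entirely at the headline level. For the converse inequality in the q-tame case you do something genuinely different from the original sources: you replace $V$ and $W$ by their interval decompositions in $\Ob$ via Corollary~\ref{cor:main}, build the interleaving summand-by-summand from a bottleneck matching, and then transport back using observability of $\inter$. This is a legitimate alternative route and is in fact cleaner than the induced-matching arguments in \cite{Lesnick_2011,Chazal_dS_G_O_2012}, precisely because the $\Ob$-decomposition theorem absorbs most of the difficulty. Note, however, that this argument is not available to the authors of the cited papers, and that within the present paper the observability of $\inter$ and of $\dgm$ (Propositions~\ref{pro:inter-ob} and~\ref{pro:dgm-ob}) are stated \emph{after} the theorem you are proving, so you would need to reorder slightly.

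One correction: your justification for the observability of the diagram is not right. The persistence measure $\mu_V([a,b]\times[c,d])$ on a \emph{fixed} rectangle is expressed through the ordinary ranks $\rk_{bc},\rk_{ac},\rk_{bd},\rk_{ad}$, which are \emph{not} observable; it cannot be read off the limiting ranks $\rk_{(\cdot\cdot)}$ as you suggest. What is observable is the limiting multiplicity $\mult_V(p,q)$, because it is a minimum over shrinking rectangles, and two ob-isomorphic modules have interleaved shrinking sequences---this is exactly the content of Proposition~\ref{pro:dgm-ob}. Your conclusion is correct but the reason given is not.
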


\subsection{Results in the observable category}

We now transport our discussion to the observable category.

\begin{pro}
\label{pro:inter-ob}
The interleaving distance between persistence modules is observable.
\end{pro}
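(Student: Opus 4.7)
The plan is to exploit the two observations already recorded in the preceding discussion: first, that an ob\nobreakdash-isomorphism restricts to an $\epsilon$\nobreakdash-interleaving for every $\epsilon > 0$, and second, that the interleaving distance $\inter$ satisfies the triangle inequality (it is already stated to be an extended pseudometric, which presupposes that interleavings compose with additive $\epsilon$\nobreakdash-values). Given these, showing observability is essentially a reduction using the triangle inequality.

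In more detail, I would unfold the claim as follows. It suffices to show that if $V, V'$ are ob-isomorphic then $\inter(V, W) = \inter(V', W)$ for every $W$ (the second slot is handled symmetrically, and the two-sided replacement follows by doing the argument twice). By the symmetry of the conclusion in $V$ and $V'$, it is enough to prove $\inter(V', W) \leq \inter(V, W)$. Fix any $\eta > \inter(V, W)$ and any $\delta > 0$. The definition of $\inter$ yields an $\eta$-interleaving $V \leftrightarrow W$; the ob-isomorphism $V \cong_{\Ob} V'$, viewed as a pair of families $(\phi_{ts} \mid s < t)$ and $(\psi_{ts} \mid s < t)$ compatible with the structure maps, gives a $\delta$-interleaving $V \leftrightarrow V'$ upon restricting to pairs with $t \geq s + \delta$ (the required identities $\phi_{ts} = \sigma_{tv}\phi_{vu}\rho_{us}$ and $\rho_{ts} = \psi_{tu}\phi_{us}$ are exactly the ob-morphism and ob-inverse equations).

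Composing these two interleavings in the standard fashion produces an $(\eta + \delta)$-interleaving between $V'$ and $W$, so $\inter(V', W) \leq \eta + \delta$. Letting $\delta \to 0^+$ and $\eta \to \inter(V, W)^+$ gives $\inter(V', W) \leq \inter(V, W)$, and the reverse inequality follows by the same argument with the roles of $V$ and $V'$ swapped.

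Honestly there is no serious obstacle: the whole content is packaged in the two bullet points just above the statement. The only tiny bookkeeping point worth being careful about is the conversion between the ob-morphism formalism (families indexed by $s < t$ with a strict inequality, composing via an intermediate index) and the interleaving formalism (families indexed by $t \geq s + \epsilon$, composing via adding shifts), but once one unwinds the definitions the compatibility conditions match up verbatim.
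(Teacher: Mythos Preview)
Your proposal is correct and follows essentially the same approach as the paper: both arguments use that an ob-isomorphism yields $\inter(V,V')=0$ (via the bullet that ob-isomorphisms restrict to $\epsilon$-interleavings for all $\epsilon>0$) and then conclude by the triangle inequality. The paper simply states this in two sentences, while you unpack the $\eta+\delta$ bookkeeping explicitly.
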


\begin{proof}
We know that $\mathrm{d}_i(V,V') = 0$ whenever $V,V'$ are ob-isomorphic. If also $W,W'$ are ob-isomorphic, it follows that $\mathrm{d}_i(V',W') = \mathrm{d}_i(V,W)$ by the triangle inequality.
\end{proof}

\begin{pro}
\label{pro:dgm-ob}
The undecorated persistence diagram is observable.
\end{pro}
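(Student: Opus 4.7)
The plan is to leverage the observation from Section~\ref{subsec:int-diag} that any ob-isomorphism $V \cong W$ in $\Ob$ restricts to an $\epsilon$-interleaving for every $\epsilon > 0$.  Combined with the persistence-measure stability (the ``box lemma'') of \cite{Chazal_dS_G_O_2012}---which asserts that whenever $V$ and $W$ are $\epsilon$-interleaved and $[a,b] \times [c,d]$ is a rectangle with $a+2\epsilon<b$ and $c+2\epsilon<d$, one has
\[
\mu_W\bigl([a+\epsilon,b-\epsilon]\times[c+\epsilon,d-\epsilon]\bigr) \;\leq\; \mu_V\bigl([a,b]\times[c,d]\bigr),
\]
and symmetrically---this will yield observability of the multiplicity function $\mult_V$ almost immediately.

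The argument proceeds as follows.  First I fix a point $(p,q)\in\mathbb{H}$ and an arbitrary rectangle $[a,b]\times[c,d]$ satisfying $a<p<b<c<q<d$, of the sort appearing in the infimum defining $\mult_V(p,q)$.  Because all these inequalities are strict, any sufficiently small $\epsilon>0$ will have the property that the shrunken rectangle $[a+\epsilon,b-\epsilon]\times[c+\epsilon,d-\epsilon]$ still contains $(p,q)$ in its interior and is itself admissible in the infimum defining $\mult_W(p,q)$.  The box lemma then gives
\[
\mult_W(p,q) \;\leq\; \mu_W\bigl([a+\epsilon,b-\epsilon]\times[c+\epsilon,d-\epsilon]\bigr) \;\leq\; \mu_V\bigl([a,b]\times[c,d]\bigr),
\]
and taking the infimum over admissible $(a,b,c,d)$ produces $\mult_W(p,q)\leq \mult_V(p,q)$; the symmetric argument with $V$ and $W$ exchanged yields the reverse inequality.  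Since the multiplicity functions then agree pointwise on $\mathbb{H}$, the finite supports $\mathbb{F}_V$ and $\mathbb{F}_W$ coincide and $\dgm(V)=\dgm(W)$.

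The main obstacle is really only bookkeeping---keeping track of strict inequalities under the $\epsilon$-shift---and is disposed of by the openness of the conditions $a<p<b<c<q<d$.  The substantive ingredient is the box lemma itself, which I propose to cite from \cite{Chazal_dS_G_O_2012} rather than re-derive.  An alternative, self-contained route would be to expand $\mu_V([a,b]\times[c,d])$ as a signed combination of ranks $\rank(\rho_{ts})$ and then pass to the infimum so as to replace each of these by the appropriate observable limiting rank of the form $\rk_{[st]}$ or $\rk_{(st)}$ introduced in Section~\ref{subsec:ob-invariants}; this detour is longer and adds nothing conceptually.
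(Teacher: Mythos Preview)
Your argument is correct and follows essentially the same route as the paper: both use the ob-isomorphism maps to sandwich $\mu_W$ on a slightly smaller rectangle by $\mu_V$ on a larger one via monotonicity, then pass to the infimum. The only difference is packaging---you invoke the box lemma of \cite{Chazal_dS_G_O_2012} after noting that an ob-isomorphism yields $\epsilon$-interleavings for all $\epsilon>0$, whereas the paper writes out the relevant eight-term commutative diagram directly (choosing intermediate points $a<a'<p<b'<b<c<c'<q<d'<d$) and appeals to monotonicity without naming the box lemma; the underlying computation is the same.
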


\begin{proof}
Let $\phi: V \wto W$ be an ob-isomorphism with inverse $\psi: W \wto V$. We will show that $\mult_V(p,q) = \mult_W(p,q)$ for all points $(p,q)$.
Let $a,b,c,d$ be values attaining the minimum in the definition of $\mult_V(p,q)$, and select $a', b', c', d'$ such that
\[
a < a' < p < b' < b < c < c' < q < d' < d.
\]
Thus $(p,q)$ lies in the interior of $[a',b'] \times [c',d']$ which lies in the interior of $[a,b] \times [c,d]$.
From the commutative diagram
\[
\begin{diagram}
\node{V_a}
	\arrow[3]{e,t}{\rho_{ba}}
	\arrow{se,b}{\phi_{a'a}}
\node[3]{V_b}
	\arrow{e,t}{\rho_{cb}}
\node{V_c}
	\arrow[3]{e,t}{\rho_{dc}}
	\arrow{se,b}{\phi_{c'c}}
\node[3]{V_d}
\\
\node[2]{W_{a'}}
	\arrow{e,b}{\sigma_{b'a'}}
\node{W_{b'}}
	\arrow[3]{e,b}{\sigma_{c'b'}}
	\arrow{ne,b}{\psi_{bb'}}
\node[3]{W_{c'}}
	\arrow{e,b}{\sigma_{d'c'}}
\node{W_{d'}}
	\arrow{ne,b}{\psi_{dd'}}
\end{diagram}
\]
it follows by applying monotonicity (to the eight-term chain of vector spaces) that $\mu_W([a',b'] \times [c',d']) \leq \mu_V([a,b] \times [c,d])$, and therefore $\mult_W(p,q) \leq \mult_V(p,q)$. The reverse inequality follows symmetrically.
\end{proof}

\begin{cor}
The stability and isometry theorem for persistence modules over the real line is meaningful and true in the observable category.
\qed
\end{cor}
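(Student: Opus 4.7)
The plan is to derive this corollary directly from the classical stability and isometry theorem, using Propositions~\ref{pro:inter-ob} and~\ref{pro:dgm-ob} together with Example~\ref{e:qtobs}.

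First I would check that the statement is meaningful in $\Ob$. Since $\Ob$ has the same objects as $\Pers$, the assignments $V \mapsto \dgm(V)$ and $(V,W) \mapsto \inter(V,W)$ are already defined on objects of $\Ob$. Propositions~\ref{pro:inter-ob} and~\ref{pro:dgm-ob} tell us that both are invariant under ob-isomorphism, so they descend to well-defined functions on isomorphism classes in $\Ob$. The bottleneck distance $\bottle(\dgm V, \dgm W)$ is then automatically an ob-invariant in each argument, since it is computed solely from $\dgm(V)$ and $\dgm(W)$. Example~\ref{e:qtobs} supplies the analogous statement for the q-tameness hypothesis. So every quantity and condition appearing in the theorem makes sense at the level of $\Ob$.

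Next I would transfer the conclusion itself. For any $V, W$ in $\Ob$, regarded simultaneously as objects of $\Pers$, the classical stability theorem gives $\bottle(\dgm V, \dgm W) \le \inter(V, W)$, and the isometry theorem upgrades this to equality when $V$ and $W$ are q-tame. Because both sides of the (in)equality, as well as the q-tameness hypothesis, have just been shown to be ob-invariant, the same (in)equality holds between any representatives of a pair of ob-isomorphism classes. This is exactly what it means for the theorem to hold in~$\Ob$.

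The only real work is the invariance of the diagram and the interleaving distance, which is already carried out in Propositions~\ref{pro:inter-ob} and~\ref{pro:dgm-ob}; once these are in place, the corollary is a repackaging of the $\Pers$-level theorem and presents no additional obstacle.
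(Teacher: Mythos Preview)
Your proposal is correct and matches the paper's intent exactly: the corollary is stated with a bare \qed because it is an immediate consequence of Propositions~\ref{pro:inter-ob} and~\ref{pro:dgm-ob} (together with Example~\ref{e:qtobs} for the q-tameness hypothesis), and you have simply spelled out that implication.
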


There is a particularly clean structure theory for q-tame modules in $\Ob$.

\begin{thm}
\label{thm:real-main}
Let $V,W$ be q-tame persistence modules over the real line. The following statements are equivalent:
\begin{vlist}
\item[(a)]
$V$ and $W$ are ob-isomorphic.

\item[(b)]
The interleaving distance between $V$ and $W$ is zero.

\item[(c)]
The undecorated persistence diagrams of $V$ and $W$ are equal.

\end{vlist}
\end{thm}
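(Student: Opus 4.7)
The plan is to prove the cyclic chain of implications (a) $\Rightarrow$ (b) $\Rightarrow$ (c) $\Rightarrow$ (a), each using results already established in the excerpt.

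The implication (a) $\Rightarrow$ (b) is immediate from Proposition~\ref{pro:inter-ob}: if $V,W$ are ob-isomorphic, then $\inter(V,W) = \inter(V,V) = 0$. The implication (b) $\Rightarrow$ (c) uses the isometry theorem: since $V,W$ are q-tame, $\bottle(\dgm(V),\dgm(W)) = \inter(V,W) = 0$, and because the infimum in the definition of the bottleneck distance is attained, there is a zero-cost matching. Any off-diagonal point in $\dgm(V)$ must then be matched to a point at $\ell^\infty$-distance $0$, hence equal to it, and this can only fail to be a point of $\dgm(W)$ if it lies on the boundary of the finite support, which by q-tameness is empty. So $\dgm(V)=\dgm(W)$ as multisets.

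The main work is (c) $\Rightarrow$ (a). By Corollary~\ref{cor:main}, both $V$ and $W$ decompose in $\Ob$ as direct sums of interval modules,
\[
V \;\cong\; \bigoplus_{i} \kk_{I_i}, \qquad W \;\cong\; \bigoplus_{j} \kk_{J_j}.
\]
Singleton intervals give ephemeral summands, hence zero objects in $\Ob$, so we may discard them and assume each $I_i$ and $J_j$ is a non-singleton interval of $\R$ with well-defined endpoints $(p,q) \in \mathbb{H}$. I will verify that, for a q-tame module that decomposes into intervals in $\Ob$, the undecorated diagram records exactly the multiset of endpoint pairs of the summands. This follows from additivity of $\mu$ across direct sums together with the computation that $\mu_{\kk_I}([a,b]\times[c,d])$ equals $1$ precisely when the endpoints of $I$ lie in $(a,b]\times(c,d]$ (with the appropriate interpretation at $\pm\infty$), so the limiting minimum in the definition of $\mult$ picks out the endpoints. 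Given (c), the multisets of endpoints of $\{I_i\}$ and $\{J_j\}$ therefore coincide in $\mathbb{H}$. Two non-singleton intervals in $\R$ with the same endpoint pair have the same interior, so by Proposition~\ref{pro:distinctness} the corresponding interval modules are ob-isomorphic. Pairing summands according to their endpoints gives an ob-isomorphism $V \cong W$.

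The main obstacle I expect is the bookkeeping in step (c) $\Rightarrow$ (a): one must check that the undecorated diagram really does read off the endpoints of the summands in a decomposition that lives in $\Ob$ (not in $\Pers$), including the cases where endpoints equal $\pm\infty$. The excerpt asserts this fact for q-tame decomposable modules; the key point is that the relevant $\mu$-values (and hence $\mult$) are preserved under ob-isomorphism by Proposition~\ref{pro:dgm-ob}, so the computation may be carried out on any representative, in particular on the direct sum of interval modules itself, where it reduces to an elementary count.
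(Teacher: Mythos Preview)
Your proof is correct and follows essentially the same cyclic route as the paper: (a)$\Rightarrow$(b) via observability of $\inter$, (b)$\Rightarrow$(c) via the stability/isometry theorem plus the fact that zero bottleneck distance forces equality of locally finite diagrams, and (c)$\Rightarrow$(a) via Corollary~\ref{cor:main} together with Proposition~\ref{pro:distinctness}. The extra care you take in justifying that the undecorated diagram records the endpoint multiset of an interval decomposition (using additivity of $\mu$ and Proposition~\ref{pro:dgm-ob} to compute on a convenient representative) is a welcome elaboration of a point the paper simply asserts.
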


\begin{proof}
We have seen (a)$\,\Rightarrow\,$(b).

(b)$\,\Rightarrow\,$(c):
The stability theorem implies that the bottleneck distance between the diagrams is zero. Since q-tame persistence modules have locally finite diagrams, it follows that the diagrams are equal.

(c)$\,\Rightarrow\,$(a):
Being q-tame, the modules $V,W$ are ob-isomorphic to direct sums of interval modules. We may assume that the intervals are open and nonempty; then the intervals are determined by the persistence diagrams, so the two direct sums are isomorphic.
\end{proof}

We finish by showing what happens when we drop q-tameness.

\begin{exa}
We construct a pair of persistence modules $V,W$ whose interleaving distance is zero but which are not ob-isomorphic. Let $K$ be a compact subset of the half-plane with no isolated points, and let $X,Y$ be countable dense subsets of~$K$. If $X \not= Y$ then
\[
V = \bigoplus_{(p,q) \in X} \kk_{(p,q)}
\quad
\text{and}
\quad
W = \bigoplus_{(p,q) \in Y} \kk_{(p,q)}
\]
are not ob-isomorphic, by Theorem~\ref{thm:KSA}.
Now let $\epsilon > 0$. Select a bijection $f: X \to Y$ that moves points by at most~$\epsilon$. Each matched pair of summands $\kk_{(p,q)}$, $\kk_{f(p,q)}$ is $\epsilon$-interleaved, so $V, W$ are $\epsilon$-interleaved. Thus the interleaving distance between $V$ and~$W$ is zero.
\qed
\end{exa}

\begin{exa}
We construct a persistence module~$V$ indexed by~$\R$ which is not ob-isomorphic to a direct sum of interval modules. Let $\hat{V}$ be a persistence module indexed by~$\Z$ that is not isomorphic to a direct sum of interval modules (such as the example of Webb~\cite{Webb_1985}). Define $V$ by setting $V_t = \hat{V}_{\lfloor t \rfloor}$ and $\rho_{ts} = \rho_{\lfloor t \rfloor \lfloor s \rfloor}$.

Certainly $V$ cannot decompose into interval modules because that would induce an interval decomposition of~$\hat{V}$. We show that the same is true for any module~$W$ ob-isomorphic to~$V$. 
To show this, let $\hat{W}$ be the module indexed by~$\Z$ defined by
\[
\hat{W}_n = \Ima(W_{n+(1/5)} \to W_{n+(3/5)})
\]
with structure maps induced by those of~$W$. Then any direct-sum decomposition of~$W$ induces a direct sum decomposition of~$\hat{W}$, and interval module summands of~$W$ become interval module summands of~$\hat{W}$.
Meanwhile, thanks to the ob-isomorphism between $V,W$ we have a commutative diagram:
\[
\begin{diagram}
\dgARROWLENGTH=2em
\node{V_{n}}
	\arrow[2]{e}
	\arrow{se}
\node[2]{V_{n+(2/5)}}
	\arrow[2]{e}
	\arrow{se,t}{\!\!*_n}
\node[2]{V_{n+(4/5)}}
\\
\node[2]{W_{n+(1/5)}}
	\arrow[2]{e}
	\arrow{ne}
\node[2]{W_{n+(3/5)}}
	\arrow{ne}
\end{diagram}
\]
The top row is just $\hat{V}_n = \hat{V}_n = \hat{V}_n$, and it follows that the map labelled~$*_n$ induces an isomorphism between $\hat{V}_n$ and $\hat{W}_n$. From the diagram
\[
\begin{diagram}
\dgARROWLENGTH=2em
\node{V_{m+(2/5)}}
	\arrow[2]{e}
	\arrow{se,t}{\!\!*_m}
\node[2]{V_{n+(2/5)}}
	\arrow{se,t}{\!\!*_n}
\\
\node[2]{W_{m+(3/5)}}
	\arrow[2]{e}
\node[2]{W_{n+(3/5)}}
\end{diagram}
\]
we see that the structure maps agree under these isomorphisms. We conclude that $\hat{V}, \hat{W}$ are isomorphic. An interval decomposition of~$W$ would induce an interval decomposition of~$\hat{V}$ which, by assumption, does not exist.
\qed
\end{exa}

\section*{Acknowledgements}

The authors wish to thank Michael Lesnick for many stimulating questions and conversations that inspired and helped shape this work.

The first author acknowledges the support of the French ANR project TopData ANR-13-BS01-0008.
The third author gratefully acknowledges the support of the Simons Foundation (grant \#267571). He thanks his home institution, Pomona College, for a sabbatical leave of absence in 2013--14. The sabbatical was hosted by the Institute for Mathematics and its Applications, University of Minnesota, with funds provided by the National Science Foundation.

\bibliographystyle{plain}
\bibliography{pQuotient}

\begin{thebibliography}{10}

\bibitem{Bucur_Deleanu_1968}
Ion Bucur and Aristide Deleanu.
\newblock {\em Introduction to the theory of categories and functors}.
\newblock With the collaboration of Peter J. Hilton and Nicolae Popescu. Pure
  and Applied Mathematics, Vol. XIX. Interscience Publication John Wiley \&
  Sons, Ltd., London-New York-Sydney, 1968.

\bibitem{Chazal_dS_G_O_2012}
Fr\'{e}d\'{e}ric Chazal, Vin de~Silva, Marc Glisse, and Steve Oudot.
\newblock The structure and stability of persistence modules.
\newblock arXiv:1207.3674 [math.AT], 2012.

\bibitem{Chazal_dS_O_2012}
Fr\'{e}d\'{e}ric Chazal, Vin de~Silva, and Steve Oudot.
\newblock Persistence stability of geometric complexes.
\newblock {\em Geometriae Dedicata}, December 2013.
\newblock arXiv:1207.3885 [math.AT].

\bibitem{CohenSteiner_E_H_2007}
David Cohen-Steiner, Herbert Edelsbrunner, and John Harer.
\newblock Stability of persistence diagrams.
\newblock {\em Discrete \& Computational Geometry}, 37(1):103--120, 2007.

\bibitem{CrawleyBoevey_2012}
William Crawley-Boevey.
\newblock Decomposition of pointwise finite-dimensional persistence modules.
\newblock arXiv:1210.0819 [math.RT], 2012.

\bibitem{Droz_2012}
Jean-Marie Droz.
\newblock A subset of {E}uclidean space with large {V}ietoris--{R}ips homology.
\newblock arXiv:1210.4097 [math.GT], 2012.

\bibitem{Edelsbrunner_L_Z_2002}
Herbert Edelsbrunner, David Letscher, and Afra Zomorodian.
\newblock Topological persistence and simplification.
\newblock {\em Discrete and Computational Geometry}, 28:511--533, 2002.

\bibitem{Faith_1973}
Carl Faith.
\newblock {\em Algebra: rings, modules and categories. {I}}.
\newblock Springer-Verlag, New York-Heidelberg, 1973.
\newblock Die Grundlehren der mathematischen Wissenschaften, Band 190.

\bibitem{Gabriel_Zisman_1967}
P.~Gabriel and M.~Zisman.
\newblock {\em Calculus of fractions and homotopy theory}, volume~35 of {\em
  Ergebnisse der {M}athematik und ihrer {G}renzgebiete}.
\newblock Springer-Verlag, 1967.

\bibitem{Gabriel_1962}
Pierre Gabriel.
\newblock Des cat\'egories ab\'eliennes.
\newblock {\em Bull. Soc. Math. France}, 90:323--448, 1962.

\bibitem{Lesnick_2011}
Michael Lesnick.
\newblock The optimality of the interleaving distance on multidimensional
  persistence modules.
\newblock arXiv:1106.5305, 2011.

\bibitem{Pareigis_1970}
Bodo Pareigis.
\newblock {\em Categories and functors}.
\newblock Translated from the German. Pure and Applied Mathematics, Vol. 39.
  Academic Press, New York-London, 1970.

\bibitem{Serre_1953}
Jean-Pierre Serre.
\newblock Groupes d'homotopie et classes de groupes ab\'{e}liens.
\newblock {\em Annals of Mathematics}, 58(2):258--294, 1953.

\bibitem{Webb_1985}
Cary Webb.
\newblock Decomposition of graded modules.
\newblock {\em Proceedings of the American Mathematical Society},
  94(4):565--571, 1985.

\bibitem{Zomorodian_Carlsson_2005}
Afra Zomorodian and Gunnar Carlsson.
\newblock Computing persistent homology.
\newblock {\em Discrete and Computational Geometry}, 33(2):249--274, 2005.

\end{thebibliography}

\end{document}